\definecolor{myblue}{rgb}{0,0,0.6}     
\DeclareMathOperator{\supp}{supp}
\DeclareMathOperator{\diam}{diam}
\DeclareMathOperator{\dist}{dist}
\newcommand{\Hull}{\mathrm{Hull}}
\begin{document}
\newcommand{\rf}[1]{(\ref{#1})}
\newcommand{\mmbox}[1]{\fbox{\ensuremath{\displaystyle{ #1 }}}}	%

\newcommand{\hs}[1]{\hspace{#1mm}}
\newcommand{\vs}[1]{\vspace{#1mm}}

\newcommand{\ri}{{\mathrm{i}}}
\newcommand{\re}{{\mathrm{e}}}
\newcommand{\rd}{\mathrm{d}}

\newcommand{\R}{\mathbb{R}}
\newcommand{\Q}{\mathbb{Q}}
\newcommand{\N}{\mathbb{N}}
\newcommand{\Z}{\mathbb{Z}}
\newcommand{\C}{\mathbb{C}}
\newcommand{\K}{{\mathbb{K}}}

\newcommand{\cA}{\mathcal{A}}
\newcommand{\cB}{\mathcal{B}}
\newcommand{\cC}{\mathcal{C}}
\newcommand{\cS}{\mathcal{S}}
\newcommand{\cD}{\mathcal{D}}
\newcommand{\cH}{\mathcal{H}}
\newcommand{\cI}{\mathcal{I}}
\newcommand{\cItilde}{\tilde{\mathcal{I}}}
\newcommand{\cIhat}{\hat{\mathcal{I}}}
\newcommand{\cIcheck}{\check{\mathcal{I}}}
\newcommand{\cIstar}{{\mathcal{I}^*}}
\newcommand{\cJ}{\mathcal{J}}
\newcommand{\cM}{\mathcal{M}}
\newcommand{\cP}{\mathcal{P}}
\newcommand{\cV}{{\mathcal V}}
\newcommand{\cW}{{\mathcal W}}
\newcommand{\scrD}{\mathscr{D}}
\newcommand{\scrS}{\mathscr{S}}
\newcommand{\scrJ}{\mathscr{J}}
\newcommand{\sD}{\mathsf{D}}
\newcommand{\sN}{\mathsf{N}}
\newcommand{\sS}{\mathsf{S}}
 \newcommand{\sT}{\mathsf{T}}
 \newcommand{\sH}{\mathsf{H}}
 \newcommand{\sI}{\mathsf{I}}

\newcommand{\bs}[1]{\mathbf{#1}}
\newcommand{\bb}{\mathbf{b}}
\newcommand{\bd}{\mathbf{d}}
\newcommand{\bn}{\mathbf{n}}
\newcommand{\bp}{\mathbf{p}}
\newcommand{\bP}{\mathbf{P}}
\newcommand{\bv}{\mathbf{v}}
\newcommand{\bx}{\mathbf{x}}
\newcommand{\by}{\mathbf{y}}
\newcommand{\bz}{{\mathbf{z}}}
\newcommand{\bxi}{\boldsymbol{\xi}}
\newcommand{\boldeta}{\boldsymbol{\eta}}	%

\newcommand{\ts}{\tilde{s}}
\newcommand{\tGamma}{{\tilde{\Gamma}}}
 \newcommand{\tbx}{\tilde{\bx}}
 \newcommand{\tbd}{\tilde{\bd}}
 \newcommand{\txi}{\xi}

\newcommand{\done}[2]{\dfrac{d {#1}}{d {#2}}}
\newcommand{\donet}[2]{\frac{d {#1}}{d {#2}}}
\newcommand{\pdone}[2]{\dfrac{\partial {#1}}{\partial {#2}}}
\newcommand{\pdonet}[2]{\frac{\partial {#1}}{\partial {#2}}}
\newcommand{\pdonetext}[2]{\partial {#1}/\partial {#2}}
\newcommand{\pdtwo}[2]{\dfrac{\partial^2 {#1}}{\partial {#2}^2}}
\newcommand{\pdtwot}[2]{\frac{\partial^2 {#1}}{\partial {#2}^2}}
\newcommand{\pdtwomix}[3]{\dfrac{\partial^2 {#1}}{\partial {#2}\partial {#3}}}
\newcommand{\pdtwomixt}[3]{\frac{\partial^2 {#1}}{\partial {#2}\partial {#3}}}
\newcommand{\bnabla}{\boldsymbol{\nabla}}
\newcommand{\dive}{\boldsymbol{\nabla}\cdot}
\newcommand{\curl}{\boldsymbol{\nabla}\times}
\newcommand{\Phixy}{\Phi(\bx,\by)}
\newcommand{\PhiOxy}{\Phi_0(\bx,\by)}
\newcommand{\dxPhixy}{\pdone{\Phi}{n(\bx)}(\bx,\by)}
\newcommand{\dyPhixy}{\pdone{\Phi}{n(\by)}(\bx,\by)}
\newcommand{\dxPhiOxy}{\pdone{\Phi_0}{n(\bx)}(\bx,\by)}
\newcommand{\dyPhiOxy}{\pdone{\Phi_0}{n(\by)}(\bx,\by)}

\newcommand{\eps}{\varepsilon}
\newcommand{\real}[1]{{\rm Re}\left[#1\right]} %
\newcommand{\im}[1]{{\rm Im}\left[#1\right]}
\newcommand{\ol}[1]{\overline{#1}}
\newcommand{\ord}[1]{\mathcal{O}\left(#1\right)}
\newcommand{\oord}[1]{o\left(#1\right)}
\newcommand{\Ord}[1]{\Theta\left(#1\right)}

\newcommand{\hsnorm}[1]{||#1||_{H^{s}(\bs{R})}}
\newcommand{\hnorm}[1]{||#1||_{\tilde{H}^{-1/2}((0,1))}}
\newcommand{\norm}[2]{\left\|#1\right\|_{#2}}
\newcommand{\normt}[2]{\|#1\|_{#2}}
\newcommand{\on}[1]{\Vert{#1} \Vert_{1}}
\newcommand{\tn}[1]{\Vert{#1} \Vert_{2}}
\newcommand{\xt}{\mathbf{x},t}
\newcommand{\PhiF}{\Phi_{\rm freq}}
\newcommand{\cone}{{c_{j}^\pm}}
\newcommand{\ctwo}{{c_{2,j}^\pm}}
\newcommand{\cthree}{{c_{3,j}^\pm}}

\newtheorem{thm}{Theorem}[section]
\newtheorem{lem}[thm]{Lemma}
\newtheorem{defn}[thm]{Definition}
\newtheorem{prop}[thm]{Proposition}
\newtheorem{cor}[thm]{Corollary}
\newtheorem{rem}[thm]{Remark}
\newtheorem{conj}[thm]{Conjecture}
\newtheorem{ass}[thm]{Assumption}
\newtheorem{example}[thm]{Example} %

\newcommand{\tH}{\widetilde{H}}
\newcommand{\Hze}{H_{\rm ze}} 	%
\newcommand{\uze}{u_{\rm ze}}		%
\newcommand{\dimH}{{\rm dim_H}}
\newcommand{\dimB}{{\rm dim_B}}
\newcommand{\IntClosOm}{\mathrm{int}(\overline{\Omega})}
\newcommand{\IntClosOmOne}{\mathrm{int}(\overline{\Omega_1})}
\newcommand{\IntClosOmTwo}{\mathrm{int}(\overline{\Omega_2})}
\newcommand{\Ccomp}{C^{\rm comp}}
\newcommand{\tCcomp}{\tilde{C}^{\rm comp}}
\newcommand{\uC}{\underline{C}}
\newcommand{\utC}{\underline{\tilde{C}}}
\newcommand{\oC}{\overline{C}}
\newcommand{\otC}{\overline{\tilde{C}}}
\newcommand{\capcomp}{{\rm cap}^{\rm comp}}
\newcommand{\Capcomp}{{\rm Cap}^{\rm comp}}
\newcommand{\tcapcomp}{\widetilde{{\rm cap}}^{\rm comp}}
\newcommand{\tCapcomp}{\widetilde{{\rm Cap}}^{\rm comp}}
\newcommand{\hcapcomp}{\widehat{{\rm cap}}^{\rm comp}}
\newcommand{\hCapcomp}{\widehat{{\rm Cap}}^{\rm comp}}
\newcommand{\tcap}{\widetilde{{\rm cap}}}
\newcommand{\tCap}{\widetilde{{\rm Cap}}}
\newcommand{\ccap}{{\rm cap}}
\newcommand{\ucap}{\underline{\rm cap}}
\newcommand{\uCap}{\underline{\rm Cap}}
\newcommand{\cCap}{{\rm Cap}}
\newcommand{\ocap}{\overline{\rm cap}}
\newcommand{\oCap}{\overline{\rm Cap}}
\DeclareRobustCommand
{\mathringbig}[1]{\accentset{\smash{\raisebox{-0.1ex}{$\scriptstyle\circ$}}}{#1}\rule{0pt}{2.3ex}}
\newcommand{\cirH}{\mathringbig{H}}
\newcommand{\cirHs}{\mathringbig{H}{}^s}
\newcommand{\cirHt}{\mathringbig{H}{}^t}
\newcommand{\cirHm}{\mathringbig{H}{}^m}
\newcommand{\cirHzero}{\mathringbig{H}{}^0}
\newcommand{\deO}{{\partial\Omega}}
\newcommand{\OO}{{(\Omega)}}
\newcommand{\Rn}{{(\R^n)}}
\newcommand{\Id}{{\mathrm{Id}}}
\newcommand{\gap}{\mathrm{Gap}}
\newcommand{\ggap}{\mathrm{gap}}
\newcommand{\isom}{{\xrightarrow{\sim}}}
\newcommand{\half}{{1/2}}
\newcommand{\mhalf}{{-1/2}}
\newcommand{\inter}{{\mathrm{int}}}

\newcommand{\Hsp}{H^{s,p}}
\newcommand{\Htq}{H^{t,q}}
\newcommand{\tHsp}{{{\widetilde H}^{s,p}}}
\newcommand{\SP}{\ensuremath{(s,p)}}
\newcommand{\Xsp}{X^{s,p}}

\newcommand{\dd}{{d}}\newcommand{\pp}{{p_*}}

\newcommand{\Rnn}{\R^{n_1+n_2}}
\newcommand{\Tr}{{\mathrm{Tr}}}

\renewcommand{\arraystretch}{1.7}
\renewcommand{\bs}[1]{\boldsymbol{#1}}
\newcommand{\vb}[1]{\vec{\bs{#1}}}
\newcommand{\be}{\bs{e}}
\renewcommand{\bn}{\bs{n}}
\renewcommand{\bx}{\mathbf{x}}
\renewcommand{\by}{\bs{y}}
\newcommand{\bg}{\bs{g}}
\newcommand{\bu}{\bs{u}}
\newcommand{\bw}{\bs{w}}
\newcommand{\bA}{\bs{A}}
\newcommand{\bC}{\bs{C}}
\newcommand{\bL}{\bs{L}}
\newcommand{\bS}{\bs{S}}
\newcommand{\bT}{\bs{T}}
\newcommand{\bU}{\bs{U}}
\newcommand{\bV}{\bs{V}}
\newcommand{\vbE}{\vb{E}}
\newcommand{\bX}{\bs{X}}
\newcommand{\bgamma}{{\bs{\gamma}}}
\newcommand{\bH}{\bs{H}}
\newcommand{\bnu}{\boldsymbol{\nu}}
\newcommand{\btau}{\boldsymbol{\tau}}
\newcommand{\bseta}{\boldsymbol{\eta}}
\newcommand{\UnionHull}{\Gamma_{\Hull,h}}
\newcommand{\UnionHullbm}{(\Gamma_{\bm})_{\Hull,h}}
\newcommand{\UnionHullbmp}{(\Gamma_{\bm'})_{\Hull,h}}
\newcommand{\cosc}{c_{\rm osc}}
\definecolor{purple0}{rgb}{0.4,0,0.5}
\definecolor{orange}{rgb}{1,0.4,0}
\newcommand{\cu}[1]{{\color{purple0} #1 }}
\definecolor{orange0}{rgb}{1,0.3,0}
\newcommand{\ctodo}[1]{{\color{orange0} {\bf TODO:} #1 }}
\definecolor{green0}{rgb}{0.1,0.6,0}
\newcommand{\dnote}[1]{{\color{green0} {\bf DH:} #1 }}

\allowdisplaybreaks[4]

\title{Numerical evaluation of singular integrals on non-disjoint self-similar fractal sets}

\author{A.\ Gibbs$^{\text{a},*}$,
		D.\ P.\ Hewett$^{\text{a}}$, 
		B. Major$^{\text{a}}$\\[2pt]
$^{\text{a}}${\footnotesize Department of Mathematics, University College London, London, United Kingdom}\\
${^*}${\footnotesize Corresponding author: andrew.gibbs@ucl.ac.uk}
}

\maketitle
\renewcommand{\thefootnote}{\arabic{footnote}}
\begin{abstract}
We consider the numerical evaluation of a class of double integrals with respect to a pair of self-similar measures over a self-similar fractal set {(the attractor of an iterated function system)}, with a weakly singular integrand of logarithmic or algebraic type. In a recent paper [Gibbs, Hewett and Moiola, Numer. Alg., 2023] it was shown that when the fractal set is ``disjoint'' in a certain sense (an example being the Cantor set), the self-similarity of the measures, combined with the homogeneity properties of the integrand, can be exploited to express the singular integral exactly in terms of regular integrals, which can be readily approximated numerically. In this paper we 
present a methodology for extending 
these results to cases where the fractal is non-disjoint   
{but non-overlapping (in the sense that the open set condition holds)}. 
Our approach applies to many well-known examples including the Sierpinski triangle, the Vicsek fractal, the Sierpinski carpet, and the Koch snowflake.

\bigskip
\textbf{Keywords:}
Numerical integration, 
Singular integrals,
Hausdorff measure,
Fractals,
Iterated function systems

\bigskip
\textbf{Mathematics Subject Classification (2020):}
65D30, %
28A80 %

\end{abstract}

\newcommand{\bm}{{\mathbf{m}}}
\newcommand{\bmp}{{\mathbf{m}'}}
\renewcommand{\bn}{{\mathbf{n}}}
\newcommand{\bnp}{{\mathbf{n}'}}
\section{Introduction}
\label{sec:Intro}

In this paper we consider the numerical evaluation of integrals of the form
\begin{align}
\label{eq:integral}
I_{\Gamma,\Gamma} = \int_\Gamma \int_\Gamma \Phi_t(x,y)\,\rd\mu'(y)\,\rd\mu(x),
\end{align}
where (see \S\ref{sec:Prelim} for details) $\Gamma\subset\R^n$ is the attractor of an iterated function system {(IFS)} of contracting similarities satisfying the open set condition, $\mu$ and $\mu'$ are self-similar (also known as ``invariant'', or ``balanced'') measures on $\Gamma$, and 
\begin{align}
\label{eq:PhitDef}
\Phi_t(x,y):= \begin{cases}
|x-y|^{-t}, & t>0,\\
\log|x-y|, & t=0,
\end{cases}
\qquad x,y\in\R^n.
\end{align}

In the case where $t>0$ and $\mu'=\mu$ the integral \eqref{eq:integral} is known in the fractal analysis literature as the ``$t$-energy'', or ``generalised electrostatic energy'', of the measure $\mu$ (see e.g.~\cite[\S4.3]{Fal}, \cite[\S2.5]{Mattila15} and \cite[Defn 4]{mantica2007asymptotic}). 
Integrals of the form \eqref{eq:integral} also arise as the diagonal entries in the system matrix in Galerkin integral equation methods for the solution of PDE boundary value problems in domains with fractal boundaries, for instance in the scattering of acoustic waves by fractal screens \cite{HausdorffBEM}. In such contexts the accurate numerical evaluation of these matrix entries is crucial for the practical implementation of the methods in question. 

Numerical quadrature rules for \eqref{eq:integral} were presented recently in \cite{HausdorffQUAD} for the case where $\Gamma$ is disjoint (see \S\ref{sec:Prelim} for our definition of disjointness), e.g.\ a Cantor set in $\R$ or a Cantor dust in $\R^n$, $n\geq 2$. 
The approach of \cite{HausdorffQUAD} is to 
decompose $\Gamma$ 
into a finite union of self-similar subsets $\Gamma_1,\ldots,\Gamma_M$ (each similar to $\Gamma$) using the IFS structure, and to write $I_{\Gamma,\Gamma}$ as a sum of integrals over all possible pairs $\Gamma_m\times \Gamma_n$. 
Using the homogeneity properties of the integrand $\Phi_t(x,y)$, namely that $\Phi_t(x,y)=\tilde\Phi_t(|x-y|)$, where $\tilde\Phi_t(r):=r^{-t}$ for $t>0$ and $\tilde\Phi_t(r):=\log{r}$ for $t=0$, which satisfies, for $\rho>0$,
\begin{align}
\label{eq:PhitProp}
\tilde\Phi_t(\rho r)= \begin{cases}
\rho^{-t}\tilde\Phi_t(r), & t>0,\\
\log\rho + \tilde\Phi_t(r), & t=0,
\end{cases}
\end{align}
one can show that the ``self-interaction'' integrals over $\Gamma_m\times \Gamma_m$, for $m=1,\ldots,M$, can be expressed in terms of the original integral $I_{\Gamma,\Gamma}$, which allows $I_{\Gamma,\Gamma}$ to be written in terms of the integrals over $\Gamma_m\times \Gamma_n$, for $m,n=1,\ldots,M$, with $m\neq n$. When $\Gamma$ is disjoint the latter integrals are regular (i.e.\ they have smooth integrands), so that one can obtain a representation formula for the singular integral \eqref{eq:integral} (when it converges) as a linear combination of regular integrals, which can be readily evaluated numerically (see \cite[Thm~4.6]{HausdorffQUAD}, which generalises previous results for Cantor sets, e.g.\ \cite{bessis1987mellin}). 
In the non-disjoint case, however, %
distinct self-similar subsets of $\Gamma$ may be non-disjoint, intersecting at discrete points (such as for the Sierpinski triangle, see \S\ref{sec:Sierpinski}) or at higher dimensional 
sets (such as for the Sierpinski carpet, see \S\ref{sec:Carpet}, or the Koch snowflake, see \S\ref{sec:Snowflake}). This means that some of the integrals over $\Gamma_m\times \Gamma_n$, for $m\neq n$, are singular, reducing the accuracy of quadrature rules based on the representation formula of \cite[Thm~4.6]{HausdorffQUAD} (which assumes they are regular). In this paper we remedy this, showing that in many non-disjoint cases (including those mentioned above), by decomposing $\Gamma$ further into smaller self-similar subsets it is possible to find a finite number of ``fundamental'' singular integrals (including $I_{\Gamma,\Gamma}$ itself) that satisfy a square system of linear equations that can be solved to express $I_{\Gamma,\Gamma}$ purely in terms of regular integrals that are amenable to accurate numerical evaluation.

We will describe our methodology in generality in \S\ref{sec:Algorithm}, but for the benefit of the reader seeking intuition we illustrate here the basic idea for the simple 
case where $\Gamma=[0,1]^2\subset\R^2$ is the unit square, $\mu=\mu'$ is the Lebesgue measure on $\R^2$ restricted to $\Gamma$, and $t=1$. Despite not generally being regarded as a ``fractal'', the square $\Gamma$ can be viewed as the self-similar attractor of an iterated function system comprising four contracting similarities, and can accordingly be split into a ``level one'' decomposition of 4 squares of side length $1/2$, or a ``level two'' decomposition of 16 squares of side length $1/4$, as illustrated in Figure \ref{fig:Square}. Let $I_{m,n}$ denote the integral $\int_{\Gamma_m}\int_{\Gamma_n}|x-y|^{-1}\,\rd y\rd x$ where $\Gamma_m$ and $\Gamma_n$ are any of the level one squares in Figure \ref{fig:Square}.  
One can then express \eqref{eq:integral} as the sum $I_{\Gamma,\Gamma}=\sum_{m=1}^4\sum_{n=1}^4 I_{m,n}$ of 16 singular integrals over all the pairs of level one squares, which can be categorised as follows: 4 self interactions ($I_{1,1}$, $I_{2,2}$ etc.), 8 edge interactions ($I_{1,2}$, $I_{2,1}$, $I_{2,3}$ etc.) and 4 vertex interactions ($I_{1,3}$, $I_{3,1}$, $I_{2,4}$, {$I_{4,2}$}). By symmetry, each of the integrals in each category is equal to all the others, so that
\begin{align}
\label{eq:Square0}
I_{\Gamma,\Gamma} = 4I_{1,1} + 8 I_{1,2} + 4 I_{1,3}.
\end{align}
Furthermore, by \eqref{eq:PhitProp} (with $t=1$), combined with a change of variables, the self-interaction integral $I_{1,1}$ can be expressed in terms of the original integral, as 
\begin{align}
\label{eq:Square1}
 I_{1,1}=
 2\int_{\Gamma_1}\int_{\Gamma_1} |(2x)-(2y)|^{-1}\,\rd y\rd x
 =\frac{1}{8}I_{\Gamma,\Gamma}.
\end{align}
Combining this with \eqref{eq:Square0} we obtain the equation 
$\frac{1}{2}I_{\Gamma,\Gamma} = 8 I_{1,2} + 4 I_{1,3}$. 
To derive two further equations connecting $I_{\Gamma,\Gamma}$, $I_{1,2}$ and $I_{1,3}$ we move to the level two decomposition, extending our notation in the obvious way, so that e.g.~$I_{12,23}$ denotes the integral $\int_{\Gamma_{12}}\int_{\Gamma_{23}}|x-y|^{-1}\,\rd y\rd x$, where $\Gamma_{12}$ and $\Gamma_{23}$ are the level two squares labelled ``12'' and ``23'' in Figure \ref{fig:Square}. Then the edge interaction integral $I_{1,2}$ can be written as a sum of 16 integrals over pairs of level two squares, which, after applying symmetry simplifications, gives
\begin{align}
\label{eq:Square2}
I_{1,2} = 2I_{12,21} + 2 I_{12,24} + R_{1,2}, 
\end{align}
where $R_{1,2} = 4I_{11,21} + 4I_{11,24} + 2I_{11,22} + 2I_{11,23}$ is a sum of regular integrals. 
Similarly, the vertex interaction integral $I_{1,3}$ can be written as
\begin{align}
\label{eq:Square3}
I_{1,3} = I_{13,31}  + R_{1,3}=I_{12,24}  + R_{1,3},
\end{align}
where $R_{1,3} = 4I_{13,32} + 4I_{13,33} + 4I_{12,33} + 2I_{12,34} + I_{11,33}= 4I_{11,24} + 4I_{13,33} + 4I_{12,33} + 2I_{11,23} + I_{11,33}$ is a sum of regular integrals. 
Furthermore, by \eqref{eq:PhitProp}, combined with a change of variables, we have that 
\begin{align}
\label{eq:Square4}
I_{12,21} = \frac{1}{8}I_{1,2} \qquad \text{ and } \qquad I_{12,24} = \frac{1}{8}I_{1,3},
\end{align}
and inserting these identities into \eqref{eq:Square2} and \eqref{eq:Square3} gives our two sought-after equations, namely $\frac{3}{4}I_{1,2}=\frac{1}{4}I_{1,3}+R_{1,2}$ and $\frac{7}{8}I_{1,3}=R_{1,3}$. 

\begin{figure}[t!]
\centering
\includegraphics[width=0.75\textwidth]{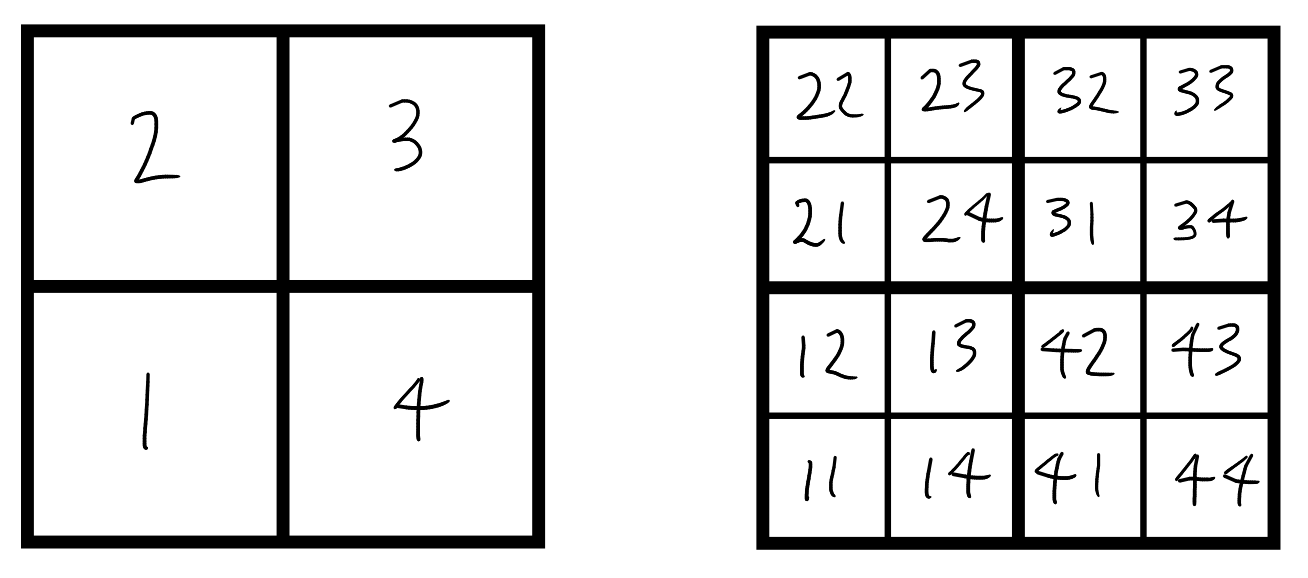}
\caption{Level 1 (left) and level 2 (right) subsets of the unit square. Here the labels ``$1$'' and ``$13$'' indicate the subsets $\Gamma_1$ and $\Gamma_{13}$ etc.\ referred to in the text.}
\label{fig:Square}
\end{figure}

To summarize, we have shown that the triple $(I_{\Gamma,\Gamma},I_{1,2},I_{1,3})^T$ satisfies the linear system
\begin{align}
\label{eq:LinearSystemSquare}
\left(\begin{array}{ccc}
\frac{1}{2} & -8 & -4 \\
0 & \frac{3}{4} & -\frac{1}{4} \\
0 & 0 & \frac{7}{8} 
\end{array}\right)
\left(\begin{array}{c}
I_{\Gamma,\Gamma}\\ 
I_{1,2}\\
I_{1,3}
\end{array}\right)
= 
\left(\begin{array}{c}
0\\ 
R_{1,2}\\
R_{1,3}
\end{array}\right),
\end{align}
and solving the system gives 
\begin{align}
\label{eq:SquareFinal}
I_{\Gamma,\Gamma} = \frac{64}{21}\left( 7R_{1,2}+5R_{1,3}\right),
\end{align}
which is an exact formula for $I_{\Gamma,\Gamma}$ in terms of the seven regular integrals $I_{11,21}$, $I_{11,24}$, $I_{11,22}$, $I_{11,23}$, $I_{13,33}$, $I_{12,33}$ and $I_{11,33}$, which are all amenable to accurate numerical evaluation, for instance with a product Gauss rule. 

Our goal in this paper is to derive formulas analogous to \eqref{eq:LinearSystemSquare} and \eqref{eq:SquareFinal} for 
more general $\Gamma$, $t$, $\mu$ and $\mu'$. 
The structure of the paper is as follows. In \S\ref{sec:Prelim} we review some preliminaries concerning self-similar fractal sets and measures. In \S\ref{sec:Similarity} we introduce the notion of ``similarity'' for integrals over pairs of subsets of $\Gamma$, and provide sufficient conditions under which it holds. In \S\ref{sec:Algorithm} we describe a general algorithm for generating linear systems of the form \eqref{eq:LinearSystemSquare} using our notion of similarity.
In \S\ref{sec:examples} we apply our algorithm to a number of examples including the Sierpinski triangle, the Sierpinski carpet, and the Koch snowflake. Finally, in \S\ref{sec:Numerics} we show how our results can be combined with numerical quadrature to compute accurate numerical approximations to the integral \eqref{eq:integral} in these and other cases. As {an} application, we show how our algorithm can be used in the context of the ``Hausdorff boundary element method'' of  \cite{HausdorffBEM} to compute acoustic scattering by non-disjoint fractal screens.

{Regarding related literature, we note that a three-dimensional version of the approach described above for integration over the unit square was used to compute the gravitational force between two cubes sharing a common face in \cite{trefethen2011ten}. More generally, this sort of approach forms the basis of the ``hierarchical quadrature'' developed for singular integrals over cubical and simplicial domains by B\"orm and Hackbusch \cite{borm2005hierarchical} and Meszmer \cite{meszmer2010hierarchical,meszmer2012hierarchical}.  
In the context of integration over fractals, we mention the work of Mantica \cite{Ma:96} and Strichartz \cite{strichartz2000evaluating}, where self-similarity techniques were used to derive exact formulas for integrals of polynomials over fractals. Our previous paper \cite{HausdorffQUAD} and the current paper can be viewed as extensions of the results of \cite{Ma:96} and  \cite{strichartz2000evaluating} to singular integrands.
}

\section{Preliminaries}
\label{sec:Prelim}

Throughout the paper we assume that $\Gamma$ is the 
attractor of an iterated function system (IFS) of contracting similarities satisfying the open set condition (OSC), 
meaning that (see e.g.\ \cite{hutchinson1981fractals})
\begin{enumerate}[(i)]
\item there exists $M\in\N$, $M\geq 2$, and a collection of maps $\{s_1,s_2,\ldots,s_M\}$, such that, for each $m=1,\ldots,M$, $s_m:\R^n\to\R^n$ satisfies
\begin{equation*}%
|s_m(x)-s_m(y)| = \rho_m|x-y|,\quad\text{for }x,y\in \R^{n},
\end{equation*} 
for some $\rho_m\in (0,1)$. 
Explicitly, for each $m=1,\ldots,M$ we can write  %
\begin{equation}\label{eq:affine}
s_m(x) = \rho_mA_mx + \delta_m,
\end{equation}
for some orthogonal matrix $A_m\in\R^{n\times n}$ and some translation $\delta_m\in\R^n$; 
\item $\Gamma$ is the unique non-empty compact set such that 
\begin{equation*}
\Gamma = s(\Gamma), 
\end{equation*}
where 
\begin{align}
\label{eq:fullmap}
s(E) := \bigcup_{m=1}^M  s_m(E), \quad  E\subset \R^{n};
\end{align}
\item there exists a non-empty bounded open set $O\subset \R^{n}$ such that
\begin{align} \label{oscfirst}
s(O) \subset O \quad \mbox{and} \quad s_m(O)\cap s_{m'}(O)=\emptyset, \quad m\neq m'\in\{1,\ldots,M\}.
\end{align}
\end{enumerate}

Then $\Gamma$ has Hausdorff dimension $\dimH(\Gamma)=d$, where $d\in (0,n]$ is the solution of the equation
\begin{align} \label{eq:dfirst}
\sum_{m=1}^M  \rho_m^d = 1.
\end{align}
We say that the IFS is \emph{disjoint} 
(cf.\ \cite[Defn 7.1]{barnsley2013developments})
if 
\begin{equation}\label{eq:Rdef}
\min_{m\neq m'}\big\{\dist \big(  s_{m}(\Gamma), s_{m'}(\Gamma)\big)  \big\}>0,
\end{equation}
which holds if and only if the open set $O$ in \eqref{oscfirst} can be taken such that $\Gamma\subset O$ 
\cite{HausdorffBEM}.

We say that the IFS is \textit{homogeneous} if $\rho_m=\rho$, $m=1,\ldots,M$, for some $\rho\in(0,1)$. In this case the solution of \eqref{eq:dfirst} is $d = \log{M}/\log{(1/\rho)}$.

To describe the decomposition of $\Gamma$ into self-similar subsets via the IFS structure we adopt the standard index notation of \cite{hutchinson1981fractals}. For $\ell\in \N$ let ${\Sigma}_\ell:=\{1,\ldots,M\}^\ell$, and, for $\bm=(m_1,\ldots,m_\ell)\in {\Sigma}_\ell$, let
\begin{equation*}%
\Gamma_{\bm} := s_{\bm}(\Gamma),\qquad \text{ where }\quad s_{\bm}:=s_{m_1}\circ\ldots\circ s_{m_\ell}.
\end{equation*}
When working with examples we shall typically write $\Gamma_{(m_1,m_2,\ldots,m_\ell)}$ as simply $\Gamma_{m_1m_2\ldots m_\ell}$ to make the notation more compact (as we did in \S\ref{sec:Intro}). 

For each $\bm=(m_1,\ldots,m_\ell)\in {\Sigma}_{\ell}$, $s_\bm$ is a contracting similarity of the form
\begin{align}
\label{eq:smExpression}
s_\bm(x) = \rho_\bm A_\bm x + \delta_\bm,
\end{align}
where (with the convention that an empty product equals $1$)
\[ \rho_\bm:=\prod_{i=1}^{\ell}\rho_{m_i}, \qquad A_\bm = \prod_{i=1}^{\ell}A_{m_i}, \qquad \delta_\bm = \sum_{i=1}^\ell \left(\prod_{j=1}^{i-1}\rho_{m_j}A_{m_j}\right)\delta_{m_i}.\]
The inverse of $s_\bm$ then has the representation\begin{align}
\label{eq:smInverseExpression}
s_\bm^{-1}(x) = \rho_\bm^{-1} A_\bm^{-1}(x - \delta_\bm).
\end{align}
Setting ${\Sigma}_{{\emptyset}}:=\{{\emptyset}\}$, $\Gamma_{{\emptyset}}:=\Gamma$ and $s_{{\emptyset}}(x):=x$, we define ${\Sigma}:={\Sigma}_{{\emptyset}} \cup (\cup_{\ell=1}^\infty {\Sigma}_\ell)$.

Given such a $\Gamma$, and a collection $(p_1,\ldots,p_M)$ of positive weights (or ``probabilities'') satisfying 
\begin{align}
\label{eq:weightsum}
0<p_m<1, \quad m=1,\ldots,M, \qquad \text{and} \qquad \sum_{m=1}^M p_m =1,
\end{align}
there exists \cite[Sections 4 \& 5]{hutchinson1981fractals} a positive Borel-regular finite measure $\mu$ supported on $\Gamma$, unique up to normalisation, called a \textit{self-similar} 
\cite{moran1998singularity} (also known as \textit{invariant} \cite{hutchinson1981fractals} or \textit{balanced} \cite{barnsley1985iterated}) measure, 
such that $\mu(E)=\sum_{m=1}^M p_m \mu(s_m^{-1}(E))$ for every measurable set $E\subset \mathbb{R}^n$. 
For such a measure, by \cite[Thm.~2.1]{moran1998singularity} 
the OSC \eqref{oscfirst} implies that $\mu(s_m(\Gamma)\cap s_{m'}(\Gamma))=0$ for each $m\neq m'$, and as a consequence we find that for $\bm=(m_1,\ldots,m_\ell)\in {\Sigma}$, and any $\mu$-measurable function $f$,\footnote{{A key step in the proof of this result is showing that $\mu(\Gamma_{m'}) = p_{m'}\mu(\Gamma)$ for ${m'}=1,\ldots,M$. To prove the latter, we first note that, by \eqref{eq:weightsum} and the positivity and self-similarity of $\mu$, if $E$ is measurable and $\mu(E)=0$ then $\mu(s_m^{-1}(E))=0$ for $m=1,\ldots,M$. In particular, since $\mu(s_m(\Gamma)\cap s_{{m'}}(\Gamma))=0$ for $m\neq {m'}$, we have that $\mu(s_m^{-1}(\Gamma_m\cap \Gamma_{m'}))=0$ for $m\neq {m'}$. Then, using the fact that $\mu$ is supported in $\Gamma$, we have $\mu(\Gamma_{m'})=\sum_{m} p_m \mu(s_m^{-1}(\Gamma_{m'})) = \sum_{m} p_m \mu(\Gamma\cap s_m^{-1}(\Gamma_{m'})) = \sum_{m} p_m \mu(s_m^{-1}(\Gamma_m\cap \Gamma_{m'})) = p_{m'} \mu(\Gamma)$, as claimed.}}
\begin{align}
\int_{\Gamma_{\bm}}f(x)\,\rd\mu(x) 
=p_\bm\int_\Gamma f\big(s_{\bm}(x)\big) \,\rd\mu(x),\label{eq:cov_general}
\end{align}
where (again with the convention that an empty product equals $1$)
\[ p_\bm:=\prod_{i=1}^{\ell}p_{m_i} .\]
In particular,
\begin{equation}\label{eq:sim_measure_general}
\quad \mu(\Gamma_{\bm})
= p_\bm\,\mu(\Gamma).
\end{equation}

\begin{example}
\label{ex:Hausdorff}
By choosing $p_m=\rho_m^d$ for $m=1,\ldots,M$ and normalising appropriately, we can obtain $\mu=\cH^d|_\Gamma$, where $\cH^d$ is the $d$-dimensional Hausdorff measure on $\R^n$ (note that {in this case} \eqref{eq:weightsum} holds by \eqref{eq:dfirst}). 
{We recall that $\cH^n$ is proportional to $n$-dimensional Lebesgue measure \cite[\S3.1]{Fal}.} 
\end{example}

{Given an IFS attractor $\Gamma$ and two self-similar measures $\mu$ and $\mu'$, with associated weights $(p_1,\ldots,p_M)$ and $(p'_1,\ldots,p'_M)$,} 
we define $t_*>0$, if it exists, to be the largest positive real number such that the integral $I_{\Gamma,\Gamma}$ converges for $0\leq t< t_*$. In \cite[Lem.~A.4]{HausdorffQUAD} we showed that if $\Gamma$ is disjoint then $t_*$ exists and is the unique positive real solution of the equation
\begin{align}
\label{eqn:tstar}
\sum_{m=1}^M p_mp_m' \rho_m^{-t_*}=1.
\end{align}
Our conjecture is that the same holds for non-disjoint $\Gamma$, under the assumption of the OSC \eqref{oscfirst}. As yet, we have not been able to prove this conjecture in its full generality. However, we shall proceed under the assumption that it holds, noting that it is well known to hold, with $t_*=d$, in the special case where $\mu=\mu'=\cH^d|_\Gamma$ for $d=\dimH(\Gamma)$ (see e.g. \cite[Corollary A.2]{HausdorffQUAD}), which is the case of relevance for the integral equation application from \cite{HausdorffBEM} that we study in \S\ref{sec:Numerics}. 
{We comment that when $\mu=\mu'$ the quantity $t_*$ is sometimes referred to as the ``electrostatic correlation dimension'' of $\mu$ \cite[Defn~6]{mantica2007asymptotic}. This and related notions of the ``dimension'' of a measure $\mu$ give, amongst other things, lower bounds on the Hausdorff dimension of the support of $\mu$ (which may be strictly smaller than that of $\Gamma$), and important information about the asymptotic behaviour of the Fourier transform of $\mu$ - see e.g.\cite{strichartz1990self}. 
}

We note that if the IFS is homogeneous then \eqref{eqn:tstar} can be solved analytically to give
\begin{align}
\label{eqn:tstar1}
t_* = \log\left(\sum_{m=1}^M p_mp_m'\right)/\log{\rho},
\end{align}
which reduces to $t_*=d$ in the case $\mu=\mu'=\cH^d|_\Gamma$ (where $p_m=p_m'=\rho^d$). 

{Self-similar measures sometimes possess useful symmetry properties.  
Let $T:\R^n\to\R^n$ be an isometry, 
with $T(x)=A_Tx+\delta_T$ for some orthogonal matrix $A_T$ and some translation vector $\delta_T$. We say that $\mu$ is \textit{invariant} under $T$ if $\mu(T(E))=\mu(E)$ for all measurable $E\subset\R^n$. If $\mu$ is invariant under $T$ then $\mu$ is also invariant under $T^{-1}$, the push-forward measure $\mu\circ T^{-1}:E\mapsto \mu(T^{-1}(E))$ coincides with $\mu$, and $T(\Gamma)=\Gamma$, so that by \cite[Thm.~3.6.1]{Bogachev} we have that, for all measurable $f$,}
\begin{align}
\label{eq:COV}
\int_\Gamma f(T(x))\rd\mu(x) = \int_\Gamma f(x)\rd\mu(x).
\end{align}

\begin{rem}
\label{rem:Isometries}
{Determining a complete list of isometries $T$ under which a given self-similar measure $\mu$ is invariant, directly from the IFS $\{s_1,\ldots,s_M\}$ and weights $p_1,\ldots,p_M$, appears to be an open problem. However, for specific examples it is often straightforward to determine the admissible $T$, as we demonstrate in \S\ref{sec:examples} and \S\ref{sec:Numerics}.
In the case $\mu=\cH^d|_\Gamma$ (see \S\ref{sec:Sierpinski}-\S\ref{sec:Snowflake}), a necessary and sufficient condition for $\mu$ to be invariant under $T$ is that $T(\Gamma)=\Gamma$, because $\cH^d$ is invariant under isometries of $\R^n$. For $\mu\neq\cH^d|_\Gamma$ it is still necessary that $T(\Gamma)=\Gamma$, but no longer sufficient (see \S\ref{sec:Interval}). 
Generically, a self-similar measure $\mu$ may not be invariant under any non-trivial isometries $T$ (see \S\ref{sec:nvo}). 
}
\end{rem}

\section{Similarity}
\label{sec:Similarity}

{We assume henceforth that $\Gamma$ is the attractor of an IFS $\{s_1,\ldots,s_M\}$ of contracting similarities satisfying the OSC, and that $\mu$ and $\mu'$ are self-similar measures on $\Gamma$, with associated weights $(p_1,\ldots,p_M)$ and $(p'_1,\ldots,p'_M)$. }

As mentioned in \S\ref{sec:Intro}, our approach to deriving representation formulas for \eqref{eq:integral} will be based on decomposing the integral $I_{\Gamma,\Gamma}$ into sums of integrals over pairs of subsets of $\Gamma$. For any two vector indices $\bm,\bm'\in {\Sigma}$ (possibly of different lengths) we define the \textit{sub-integral}
\[ I_{\bm,\bm'}:=\int_{\Gamma_\bm}\left(\int_{\Gamma_{\bm'}}\Phi_t(x,y)\rd\mu'(y)\right)\rd\mu(x). \]
Note that the original integral $I_{\Gamma,\Gamma}={I_{\emptyset,\emptyset}}$ is included in this definition. 
Central to our approach will be identifying when two sub-integrals $I_{\bm,\bm'}$ and $I_{\bn,\bn'}$ are \textit{similar}, in the sense that 
\begin{align}
\label{eq:SimilarityDefn}
I_{\bm,\bm'}=a I_{\bn,\bn'}+b, 
\end{align}
for some $a>0$ and $b\in\R$ that we can determine explicitly in terms of the parameters of the IFS and the measures $\mu$ and $\mu'$. 

The simplest instance of similarity occurs when $\mu=\mu'$, in which case the symmetry of the integrand (i.e.\ the fact that $\Phi_t(x,y)=\Phi_t(y,x)$), combined with Fubini's theorem, provides the following elementary result (which was used in the example in \S\ref{sec:Intro}, for which $I_{1,2}=I_{2,1}$ etc.).
\begin{prop}
\label{lem:Fubini}
If $\mu=\mu'$ then $I_{\bm,\bn}=I_{\bn,\bm}$ for each $\bm,\bn\in {\Sigma}$. 
\end{prop}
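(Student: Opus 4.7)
The plan is to reduce the assertion to Fubini's (or Tonelli's) theorem combined with the evident symmetry $\Phi_t(x,y) = \Phi_t(y,x)$ of the integrand defined in \eqref{eq:PhitDef}. When $\mu = \mu'$, the sub-integral $I_{\bn,\bm}$ is, by definition, an iterated integral with respect to the product measure $\mu \otimes \mu$ over $\Gamma_\bn \times \Gamma_\bm$, and the goal is to rearrange it into an iterated integral over $\Gamma_\bm \times \Gamma_\bn$ with the roles of $x$ and $y$ swapped.

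First I would write out
\[
I_{\bn,\bm} = \int_{\Gamma_\bn}\int_{\Gamma_\bm} \Phi_t(x,y)\,\rd\mu(y)\,\rd\mu(x),
\]
and interchange the order of integration. Second, I would invoke $\Phi_t(x,y) = \Phi_t(y,x)$ to rewrite the integrand as $\Phi_t(y,x)$. Finally, relabelling the dummy variables by swapping $x$ and $y$ yields exactly $I_{\bm,\bn}$. No property of $\mu$ beyond its being a measure is needed; in particular, neither self-similarity nor the OSC is invoked.

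The main subtlety is justifying the exchange of integration order. For $t > 0$ the integrand is non-negative, so Tonelli's theorem applies unconditionally and the identity $I_{\bm,\bn}=I_{\bn,\bm}$ holds in $[0,\infty]$ regardless of convergence. For $t = 0$ the integrand $\log|x-y|$ can change sign, so Fubini's theorem requires absolute integrability; this follows by splitting $\Gamma_\bm \times \Gamma_\bn$ according to whether $|x-y|\le 1$ or $|x-y|>1$, noting that $|x-y|>1$ contributes a bounded integrand on the compact set $\Gamma\times\Gamma$, while on $\{|x-y|\le 1\}$ we have $|\log|x-y|| = -\log|x-y| \le \Phi_t(x,y)$ for any small $t>0$, whose integral is finite whenever $t<t_*$. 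Since this is the standing assumption ensuring that the sub-integrals in the statement are well defined in the first place, Fubini applies and the proof is complete.
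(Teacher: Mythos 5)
Your proof is correct and follows exactly the route the paper indicates (and leaves implicit): the symmetry $\Phi_t(x,y)=\Phi_t(y,x)$ combined with Fubini--Tonelli, with Tonelli handling $t>0$ and absolute integrability handling $t=0$. One cosmetic point: the pointwise bound $-\log|x-y|\le \Phi_t(x,y)$ on $\{|x-y|\le 1\}$ only holds up to a constant factor (e.g.\ $-\log r\le \tfrac{1}{\mathrm{e}t}\,r^{-t}$ for $r\in(0,1]$), but this changes nothing since a constant multiple suffices for the integrability needed to apply Fubini.
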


Other instances of similarity may be associated with the IFS structure (as for the derivation of \eqref{eq:Square1} and \eqref{eq:Square4}, in the example in \S\ref{sec:Intro}), and/or with other geometrical symmetries (as for the observation that $I_{1,2}=I_{2,3}$ etc., in the example in \S\ref{sec:Intro}). 
The following result provides sufficient conditions under which a given pair of sub-integrals $I_{\bm,\bm'}$ and $I_{\bn,\bn'}$ are similar in this manner. 
{We recall that the notion of a self-similar measure being invariant under an isometry $T$ was defined in \S\ref{sec:Prelim}, and that the question of determining for which isometries this holds was discussed in Remark \ref{rem:Isometries}. 
If no non-trivial isometries can be determined for $\mu$ or $\mu'$ one can always take $T$ and $T'$ to be the identity in the following. 
}
\begin{prop}
\label{lem:SimilaritySufficient}
Let $\bm,\bm',\bn,\bn'\in {\Sigma}$. %
Let $T$ and $T'$ be isometries of $\R^n$ such that $\mu$ and $\mu'$ are invariant under $T$ and $T'$ respectively. 
Suppose there exists $\varrho>0$ such that 
\begin{align}
\label{eq:SimilarityCond}
|s_{\bm}(T(s_{\bn}^{-1}(x)))-s_{\bm'}(T'(s_{\bn'}^{-1}(y)))| = \varrho|x-y|, \quad  x,y \in \R^n.
\end{align}
Then  
\begin{align}
\label{eq:Proportional}
I_{\bm,\bm'} = 
\begin{cases}
\displaystyle\frac{p_\bm p_{\bm'}'}{p_\bn p_{\bn'}'}\varrho^{-t} I_{\bn,\bn'}, & t\in(0,t_*),\\[4mm]
\displaystyle p_\bm p_{\bm'}'\left(\mu(\Gamma)\mu'(\Gamma)\log{\varrho} + \frac{1}{p_\bn p_{\bn'}'}I_{\bn,\bn'}\right) , & t=0.
\end{cases}
\end{align}
\end{prop}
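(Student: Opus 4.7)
The proof should be a direct computation chaining together three ingredients already available in the paper: the change-of-variables formula \eqref{eq:cov_general} for self-similar measures, the isometry-invariance identity \eqref{eq:COV}, and the homogeneity property \eqref{eq:PhitProp} of $\Phi_t$. The plan is to start from $I_{\bm,\bm'}$, push the integration domain back to $\Gamma\times\Gamma$, insert the isometries $T,T'$ for free using invariance, use hypothesis \eqref{eq:SimilarityCond} to convert the integrand into one built from $s_\bn$ and $s_{\bn'}$, and finally pull the domain back out to $\Gamma_\bn\times\Gamma_{\bn'}$ to recover $I_{\bn,\bn'}$.

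Concretely, I would first apply \eqref{eq:cov_general} to both variables to obtain
\begin{align*}
I_{\bm,\bm'} = p_\bm p_{\bm'}'\int_\Gamma\int_\Gamma \Phi_t\bigl(s_\bm(x),s_{\bm'}(y)\bigr)\,\rd\mu'(y)\,\rd\mu(x).
\end{align*}
Since $\mu$ is invariant under $T$ and $\mu'$ under $T'$, applying \eqref{eq:COV} in each variable (with the test function $x\mapsto \Phi_t(s_\bm(x),s_{\bm'}(y))$ and its analogue in $y$, using Fubini to handle the iteration) lets me replace $x$ by $T(x)$ and $y$ by $T'(y)$ inside the integrand at no cost. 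Next, the hypothesis \eqref{eq:SimilarityCond}, rewritten with $x=s_\bn(u)$, $y=s_{\bn'}(v)$ (which is valid for all $u,v\in\R^n$ because \eqref{eq:SimilarityCond} holds for all $x,y\in\R^n$), reads
\begin{align*}
\bigl|s_\bm(T(u))-s_{\bm'}(T'(v))\bigr| = \varrho\,\bigl|s_\bn(u)-s_{\bn'}(v)\bigr|, \qquad u,v\in\R^n.
\end{align*}
Combined with \eqref{eq:PhitProp}, this yields $\Phi_t(s_\bm(T(u)),s_{\bm'}(T'(v)))=\varrho^{-t}\Phi_t(s_\bn(u),s_{\bn'}(v))$ when $t>0$ and $\Phi_0(s_\bm(T(u)),s_{\bm'}(T'(v)))=\log\varrho+\Phi_0(s_\bn(u),s_{\bn'}(v))$ when $t=0$.

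Substituting these identities into the double integral and using \eqref{eq:cov_general} in reverse to restore the domain $\Gamma_\bn\times\Gamma_{\bn'}$ produces exactly the two cases of \eqref{eq:Proportional}; in the $t=0$ case the constant $\log\varrho$ integrates to $\mu(\Gamma)\mu'(\Gamma)\log\varrho$ times the prefactor $p_\bm p_{\bm'}'$. No step is genuinely difficult, but two small bookkeeping issues deserve care: first, to apply \eqref{eq:COV} one must check measurability and, for $t\in(0,t_*)$, absolute integrability of the transformed integrands so that Fubini applies — this follows from the assumption $t<t_*$ together with the fact that composition with the isometries $T,T'$ does not change the distance $|s_\bm(\cdot)-s_{\bm'}(\cdot)|$ up to the factor $\varrho$; second, one must keep the product $p_\bm p_{\bm'}'/(p_\bn p_{\bn'}')$ straight when inserting and removing the two change-of-variables factors. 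These are the only places where a mistake could creep in, and there is no real conceptual obstacle beyond them.
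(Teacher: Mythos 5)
Your proposal is correct and follows essentially the same route as the paper's proof: apply \eqref{eq:cov_general} to pull both integrals back to $\Gamma\times\Gamma$, insert $T,T'$ via measure invariance, use \eqref{eq:SimilarityCond} (evaluated at $s_\bn(u),s_{\bn'}(v)$) together with \eqref{eq:PhitProp} to relate the integrands, and reassemble $I_{\bn,\bn'}$ with the weight factors $p_\bn p_{\bn'}'$. The bookkeeping points you flag are handled implicitly in the paper in exactly the way you describe.
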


\begin{proof}
By \eqref{eq:cov_general}, and the invariance of $\mu$ and $\mu'$ with respect to $T$ and $T'$, we have that 
\begin{align}
\notag
I_{\bm,\bm'} &= p_\bm p_{\bm'}'\int_\Gamma\int_\Gamma \Phi_t(s_\bm(x),s_{\bm'}(y))\rd\mu'(y)\rd\mu(x)\\
\label{eq:mInv}
&= p_\bm p_{\bm'}'\int_\Gamma\int_\Gamma \Phi_t(s_\bm(T(x)),s_{\bm'}(T'(y)))\rd\mu'(y)\rd\mu(x),\\
\label{eq:nInv}
I_{\bn,\bn'} &= p_\bn p_{\bn'}'\int_\Gamma\int_\Gamma \Phi_t(s_\bn(x),s_{\bn'}(y))\rd\mu'(y)\rd\mu(x).
\end{align}
Condition \eqref{eq:SimilarityCond} implies that 
\[
|s_\bm(T(x))-s_{\bm'}(T'(y))| = \varrho |s_\bn(x)-s_{\bn'}(y)|, \quad  x,y \in \R^n,
\]
which in turn, by \eqref{eq:PhitProp}, implies that 
\[
\Phi_t(s_\bm(T(x)),s_{\bm'}(T'(y)))=
\begin{cases}
\varrho^{-t} \Phi_t(s_\bn(x),s_{\bn'}(y)), & t\in(0,t_*),\\
\log{\varrho} + \Phi_t(s_\bn(x),s_{\bn'}(y)), & t=0,
\end{cases}
\qquad  x,y\in\R^n,
\]
and combining this with \eqref{eq:mInv}-\eqref{eq:nInv} gives the result. 
\end{proof}

The following result provides an equivalent characterization of the sufficient condition \eqref{eq:SimilarityCond} in terms of the scaling factors, orthogonal matrices and translations associated with the maps $s_\bm,s_\bmp,s_\bn,s_\bnp,T$ and $T'$ (see \eqref{eq:smExpression} for the definition of the notation $\rho_\bm$, $A_\bm$, $\delta_\bm$ etc.). 
We remark that a necessary {and sufficient} condition for \eqref{eq:Cond2} to hold is that 
\begin{align}
\label{eq:Cond2Nec}
\frac{\rho_\bm}{\rho_\bmp} = \frac{\rho_\bn}{\rho_\bnp} .
\end{align}
\begin{prop}
\label{lem:SimilaritySufficientEquiv}
Let $Tx=A_Tx+\delta_T$ and $T'x=A_{T'}x+\delta_{T'}$. Then condition \eqref{eq:SimilarityCond} holds if and only if the following three conditions are satisfied:
\begin{align}
\label{eq:Cond2}
\rho_\bm \rho_\bn^{-1} = \rho_\bmp \rho_\bnp^{-1} = \varrho,
\end{align}
\begin{align}
\label{eq:Cond3Equiv}
A_\bm A_T A_\bn^{-1} = A_\bmp A_{T'} A_\bnp^{-1},
\end{align}
\begin{align}
\label{eq:Cond1}
\delta_\bm - \delta_\bmp - \rho_\bm A_\bm (A_T\rho_\bn^{-1} A_\bn^{-1} \delta_\bn-\delta_T) +\rho_\bmp  A_\bmp (A_{T'}\rho_{\bnp}^{-1}A_\bnp^{-1} \delta_\bnp-\delta_{T'}) = 0.
\end{align}
\end{prop}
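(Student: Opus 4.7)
The proof will be a direct computation matching coefficients of an affine map, together with a clean characterization of when a map of the form $x\mapsto Lx+c$ is an orthogonal scaling. First I would write out the composite maps on each side of \eqref{eq:SimilarityCond} explicitly using the affine representations \eqref{eq:smExpression} and \eqref{eq:smInverseExpression} of $s_\bm,s_\bn^{-1}$ (and likewise for the primed indices), together with $T(x)=A_Tx+\delta_T$ and $T'(x)=A_{T'}x+\delta_{T'}$. A routine expansion shows
\[
s_\bm\!\big(T(s_\bn^{-1}(x))\big) = \rho_\bm\rho_\bn^{-1} A_\bm A_T A_\bn^{-1}\, x \;+\; C_{\bm,\bn},
\]
where $C_{\bm,\bn}=\delta_\bm - \rho_\bm A_\bm\big(A_T\rho_\bn^{-1}A_\bn^{-1}\delta_\bn - \delta_T\big)$, and analogously for the primed version. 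Thus the displacement in \eqref{eq:SimilarityCond} takes the form $Px-Qy+D$, where $P=\rho_\bm\rho_\bn^{-1}A_\bm A_T A_\bn^{-1}$, $Q=\rho_\bmp\rho_\bnp^{-1}A_\bmp A_{T'} A_\bnp^{-1}$, and $D=C_{\bm,\bn}-C_{\bmp,\bnp}$.

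For the ``if'' direction, under \eqref{eq:Cond2}, \eqref{eq:Cond3Equiv} one has $P=Q=\varrho B$ where $B:=A_\bm A_T A_\bn^{-1}=A_\bmp A_{T'} A_\bnp^{-1}$ is orthogonal (as a product of orthogonal matrices, noting $A_T,A_{T'}$ are orthogonal since $T,T'$ are isometries); and \eqref{eq:Cond1} is precisely $D=0$. Then $|Px-Qy+D|=\varrho|B(x-y)|=\varrho|x-y|$ because $B$ preserves norms. For the ``only if'' direction I would extract the three conditions by choosing special values of $(x,y)$ in \eqref{eq:SimilarityCond}: setting $x=y=0$ forces $D=0$, which after rearrangement is exactly \eqref{eq:Cond1}; then fixing $y=0$ and letting $x$ vary gives $|Px|=\varrho|x|$ for all $x$, and similarly $|Qy|=\varrho|y|$; finally with $D=0$ and setting $y=x$ in the resulting identity $|Px-Qy|=\varrho|x-y|$ gives $P=Q$.

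From $|Px|=\varrho|x|$ one concludes $P^TP=\varrho^2I$, so $P=\varrho B$ for some orthogonal $B$; the same for $Q$. The uniqueness of the polar-type decomposition of a nonzero scalar multiple of an orthogonal matrix (using that $\rho_\bm\rho_\bn^{-1}>0$ and $A_\bm A_T A_\bn^{-1}$ is orthogonal) then forces $\rho_\bm\rho_\bn^{-1}=\varrho$ and $A_\bm A_T A_\bn^{-1}=B$, giving \eqref{eq:Cond2} and one half of \eqref{eq:Cond3Equiv}; the analogous argument on $Q$ and the equality $P=Q$ complete \eqref{eq:Cond3Equiv}. There is no serious obstacle here — the only mildly delicate point is bookkeeping the constant term $C_{\bm,\bn}$ so that the resulting equation $D=0$ matches \eqref{eq:Cond1} verbatim (in particular remembering that scalars commute past matrices, so $\rho_\bn^{-1}A_T A_\bn^{-1}=A_T\rho_\bn^{-1}A_\bn^{-1}$); everything else is linear algebra.
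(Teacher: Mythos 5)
Your proposal is correct and follows essentially the same route as the paper: expand the composite affine maps, set $x=y=0$ to get \eqref{eq:Cond1}, then $y=0$ and $x=0$ to get \eqref{eq:Cond2}, and finally deduce \eqref{eq:Cond3Equiv} from the remaining norm identity. The only cosmetic difference is that you obtain $P=Q$ by setting $y=x$, whereas the paper reduces to $|x-By|=|x-y|$ for all $x,y$ and invokes the choice $x=By$ to conclude $B$ is the identity — the same trick in a slightly different guise.
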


\begin{proof}
We first note that \eqref{eq:SimilarityCond} is equivalent to
\begin{align}
&|\rho_\bm A_\bm (A_T\rho_\bn^{-1} A_\bn^{-1}(x-\delta_\bn)+\delta_T)+\delta_\bm \notag\\
\label{eq:SimCondEquiv}
&\qquad \qquad \qquad - \rho_\bmp A_\bmp (A_{T'}\rho_\bnp^{-1} A_\bnp^{-1}(y-\delta_\bnp)+{\delta_{T'}}) -\delta_\bmp| = \varrho |x-y|, \quad  x,y\in\R^n. 
\end{align}
It is easy to check that \eqref{eq:Cond2}-\eqref{eq:Cond1} are sufficient for \eqref{eq:SimCondEquiv} (and hence for \eqref{eq:SimilarityCond}). 
To see that they are also necessary, 
suppose that \eqref{eq:SimCondEquiv} holds. Then taking $x=y=0$ in \eqref{eq:SimCondEquiv} gives \eqref{eq:Cond1}. Combining this with \eqref{eq:SimCondEquiv} gives
\begin{align}
\label{eq:SimCondEquiv2}
|\rho_\bm \rho_\bn^{-1}A_\bm A_T A_\bn^{-1}x - \rho_\bmp \rho_\bnp^{-1}A_\bmp A_{T'} A_\bnp^{-1}y| = \varrho |x-y|,\quad  x,y\in\R^n, 
\end{align}
and taking first $x=0$ and $y\neq 0$, then $x\neq 0$ and $y= 0$ in this equation gives
\begin{align*}
|\rho_\bmp \rho_\bnp^{-1}A_\bmp A_{T'}A_\bnp^{-1}y| = \rho_\bmp \rho_\bnp^{-1}|y|=\varrho|y|,& \quad  y\in\R^n,\\
|\rho_\bm \rho_\bn^{-1}A_\bm A_T A_\bn^{-1}x| = \rho_\bm \rho_\bn^{-1}|x|=\varrho|x|,& \quad  x\in\R^n,
\end{align*}
from which we deduce \eqref{eq:Cond2}.
Finally, combining \eqref{eq:Cond2} with \eqref{eq:SimCondEquiv2} gives 
\begin{align*}
|A_\bm A_T A_\bn^{-1}x - A_\bmp A_{T'}A_\bnp^{-1}y| = |x-y|, \quad  x,y\in\R^n, 
\end{align*}
or, equivalently,
\begin{align}
\label{eq:SimCondEquiv4}
|x - (A_\bm A_T A_\bn^{-1})^{-1} A_\bmp A_{T'} A_\bnp^{-1}y| = |x-y|, \quad  x,y\in\R^n. 
\end{align}
Now we note that if $B$ is an $n\times n$ matrix and $|x-By|=|x-y|$ for all $x,y\in\R^n$ then $B$ is the identity matrix. To prove this, suppose that $B$ is not the identity matrix. Then there exists $y\neq0$ such that $By\neq y$, and setting $x=By$ gives $|x-By|=0$ and $|x-y|\neq 0$, so that $|x-By|\neq |x-y|$. 
Hence \eqref{eq:SimCondEquiv4} implies that
\begin{align}
\label{eq:Cond3}
(A_\bm A_T A_\bn^{-1})^{-1}A_\bmp A_{T'} A_\bnp^{-1}
\end{align}
{is the identity matrix}, which is equivalent to \eqref{eq:Cond3Equiv}.
\end{proof}

\begin{rem}
\label{rem:SimilaritySpecialCase}
If $A_{\bm}$, $A_{\bmp}$, $A_{\bn}$ and $A_{\bnp}$  are all equal to the identity matrix, and $A_{T}=A_{T'}$, then condition \eqref{eq:Cond3Equiv} is automatically satisfied. If, further, $\delta_T$ and $\delta_{T'}$ are both zero, and condition \eqref{eq:Cond2} holds, then condition \eqref{eq:Cond1} reduces to 
\begin{align}
\label{eq:Cond1Special}
\delta_\bm - \delta_\bmp = \varrho A_T(\delta_\bn - \delta_\bnp).
\end{align}
\end{rem}

\section{Algorithm for deriving representation formulas}
\label{sec:Algorithm}

We present our algorithm for deriving representation formulas for the integral $I_{\Gamma,\Gamma}$ in Algorithm \ref{alg:Algorithm} below. 
The output of the algorithm, when it terminates, is a linear system of the form
\begin{align}
\label{eq:LinearSystem}
A\mathbf{x} = B\mathbf{r} + \mathbf{b},
\end{align}
where $\mathbf{x}=[I_{\bm^{}_{s,1},\bm'_{s,1}},\ldots,I_{\bm^{}_{s,n_s},\bm'_{s,n_s}}]^T\in \R^{n_s}$ is a vector of ``fundamental'' singular sub-integrals {(the subscript $_s$ standing for ``singular'')}, with the original integral $I_{\bm^{}_{s,1},\bm'_{s,1}}=I_{\Gamma,\Gamma}$ as its first entry,
$\mathbf{r}=[I_{\bm^{}_{r,1},\bm'_{r,1}},\ldots,I_{\bm^{}_{r,n_r},\bm'_{r,n_r}}]^T\in \R^{n_r}$ is a vector of ``fundamental'' regular sub-integrals {(the subscript $_r$ standing for ``regular'')}, $A\in \R^{n_s\times n_s}$ and $B\in \R^{n_s\times n_r}$ are matrices, and $\mathbf{b}\in \R^{n_s}$ is a vector of logarithmic terms present only in the case $t=0$.  
The algorithm is based on repeated subdivision of the integration domain and the identification of similarities between the resulting sub-integrals (in the sense of \eqref{eq:SimilarityDefn}), and  
the word ``fundamental'' refers to a sub-integral which, when encountered in the subdivision algorithm, is not found to be similar to any other sub-integral previously encountered. 
Whether the algorithm terminates, and the resulting lengths $n_s$ and $n_r$ of the vectors $\mathbf{x}$ and $\mathbf{r}$, depends on the measures $\mu$ and $\mu'$, as we shall demonstrate in \S\ref{sec:examples}.

If the algorithm terminates, 
one can obtain numerical approximations to the values of the integrals in $\mathbf{x}$, including the original integral $I_{\Gamma,\Gamma}$, by applying a suitable quadrature rule to the regular integrals in $\mathbf{r}$, solving the system \eqref{eq:LinearSystem}, and extracting the relevant entry from the solution vector $\mathbf{x}$. We discuss this in more detail in \S\ref{sec:Quadrature}.

\begin{rem}
\label{rem:Disjoint}
If $\Gamma$ is disjoint then the algorithm terminates with $n_s=1$ and recovers the result of \cite[Thm~4.6]{HausdorffQUAD}.
\end{rem}

\begin{rem}
\label{rem:Subdivision}
Our algorithm {(in line 9)} requires us to specify a subdivision strategy. With reference to the notation in Algorithm \ref{alg:Algorithm}, we have considered two such strategies:
\begin{itemize}
\item Strategy 1: always subdivide both $\Gamma_{\bm_{s,n}}$ and $\Gamma_{\bm'_{s,n}}$, i.e.\ take (with $({\emptyset},m)$ interpreted as $(m)$)
\begin{align}
\label{eq:Strategy1}
\mathcal{I}_n = \{(\bm_{s,n},m)\}_{m=1}^M, \quad \mathcal{I}'_{n} = \{(\bm'_{s,n},m)\}_{m=1}^M. 
\end{align}
\item Strategy 2: subdivide only the larger of $\Gamma_{\bm_{s,n}}$ and $\Gamma_{\bm'_{s,n}}$, i.e.\ take
\begin{align}
\label{eq:Strategy2}
\begin{cases}
\mathcal{I}_n = \{(\bm_{s,n},m)\}_{m=1}^M \text{ and } \mathcal{I}'_{n} = \{\bm'_{s,n}\}, & \text{if } \diam(\Gamma_{\bm_{s,n}})>\diam(\Gamma_{\bm'_{s,n}}),\\
\mathcal{I}_n = \{\bm_{s,n}\} \text{ and } \mathcal{I}'_{n} = \{(\bm'_{s,n},m)\}_{m=1}^M, & \text{if } \diam(\Gamma_{\bm_{s,n}})<\diam(\Gamma_{\bm'_{s,n}}),\\
\mathcal{I}_n = \{(\bm_{s,n},m)\}_{m=1}^M\text{ and } \mathcal{I}'_{n} = \{(\bm'_{s,n},m)\}_{m=1}^M, & \text{if } \diam(\Gamma_{\bm_{s,n}})=\diam(\Gamma_{\bm'_{s,n}}).
\end{cases}
\end{align}
\end{itemize}
If the IFS is homogeneous then the two subdivision strategies coincide, since using Strategy 2 we never encounter pairs of subsets with different diameters. 
\end{rem}

\begin{rem}
\label{rem:Singular}
Our algorithm {(in line 11)} requires the user to be able to determine whether an integral $I_{\bn,\bnp}$ is singular or regular, i.e.\ whether $\Gamma_\bn$ intersects ${\Gamma_\bnp}$ non-trivially or not. Deriving a criterion for this based solely on the indices $\bn$ and $\bnp$ and the IFS parameters appears to be an open problem. However, for the examples considered in \S\ref{sec:examples} we were able to determine this by inspection on a case-by-case basis. 
{We emphasize that one does not need to specify the type of singularity, i.e.\ the dimension of $\Gamma_\bn\cap {\Gamma_\bnp}$, merely whether $\Gamma_\bn\cap {\Gamma_\bnp}$ is empty or not.}
\end{rem}

\begin{rem}
\label{rem:Similarities}
Our algorithm {(in lines 12 and 20)} requires a way of checking for ``similarity'' of pairs of subintegrals. For this we use Propositions \ref{lem:Fubini}-\ref{lem:SimilaritySufficientEquiv}, {combined with a user-provided list of isometries $T$ and $T'$ under which $\mu$ and $\mu'$ are respectively invariant. Then, 
}
to verify \eqref{eq:SimilarityCond} in Proposition \ref{lem:SimilaritySufficient}, we use Proposition \ref{lem:SimilaritySufficientEquiv}: we first check \eqref{eq:Cond2}, then \eqref{eq:Cond3Equiv}, then \eqref{eq:Cond1}. As noted in Remark \ref{rem:SimilaritySpecialCase}, in the special case where 
$A_{\bm}$, $A_{\bmp}$, $A_{\bn}$ and $A_{\bnp}$ are all equal to the identity matrix, $A_{T}=A_{T'}$, and $\delta_T$ and $\delta_{T'}$ are both zero, it is sufficient to check \eqref{eq:Cond2} and then \eqref{eq:Cond1Special}. 

{
The question of how to determine the permitted isometries $T$ and $T'$ was discussed in Remark \ref{rem:Isometries}. 
If the user is not able to provide the full list of isometries for $\mu$ and $\mu'$, it may be that the algorithm still terminates, but does so with a larger number $n_s$ of fundamental singular integrals than would be obtained with the full list of isometries. 
However, in \S\ref{sec:Interval} we provide an example where failing to specify a non-trivial isometry would lead to non-termination of the algorithm.}

\end{rem}

\begin{rem}
{
The matrix $A$ (when the algorithm terminates) depends not only on the IFS $\{s_1,\ldots,s_M\}$ and the weights $p_1,\ldots,p_M$, but also on the subdivision strategy used in line 9 of the algorithm. For both subdivision strategies described in Remark \ref{rem:Subdivision}, the first column of $A$ is guaranteed to be of the form $(\alpha,0\ldots,0)^T$ for some $\alpha>0$, because of the fact that $\mu(\Gamma_m\cap\Gamma_{m'})=0$ for $m\neq m'$. 
For all the examples in \S\ref{sec:Sierpinski}-\S\ref{sec:Snowflake} 
the matrix $A$ is upper triangular, with diagonal entries that are all non-zero when $t<t_*$, so that $A$ is invertible when $t<t_*$. 
However, upper-triangularity of $A$ is not guaranteed in general, as \S\ref{sec:Interval} illustrates (see in particular \eqref{eq:AInterval}), and proving that $A$ is invertible whenever the algorithm terminates remains an open problem.
}
\end{rem}

\begin{center}
\begin{algorithm2e}[t!]
\begin{center}
\fbox{\parbox{.89\textwidth}{
\SetAlgoLined
\nl Initialise $A=[\,]$, 
$B=[\,]$, 
$\mathbf{r}=[\,]$, 
$\mathbf{b}=[\,]$, 
$n_r=0$,   
$\mathbf{x}=[I_{\Gamma,\Gamma}]$, $n_s=1$, and $n=0$. \\
\nl \While{$n<n_s$}{
\nl Increment $n=n+1$.\\
\nl Append a $1\times n_s$ row of zeros to $A$.\\ \nl Set $A_{n,n}=1$.\\
\nl \If{$B$ \emph{is non-empty}}{\nl Append a $1\times n_r$ row of zeros to $B$.} %
\nl Append a single zero entry to $\mathbf{b}$. \\
\nl Subdivide $\Gamma_{\bm_{s,n}}=\bigcup_{\bn\in\mathcal{I}_n}\Gamma_\bn$ and $\Gamma_{\bm'_{s,n}}=\bigcup_{\bnp\in\mathcal{I}'_n}\Gamma_\bnp$ according to some subdivision strategy producing index sets $\mathcal{I}_n,\mathcal{I}'_n\subset {\Sigma}$. \\
\nl \For{\emph{each pair} $(\bn,\bnp)\in \mathcal{I}_n\times \mathcal{I}'_n$}{
\nl \eIf{$I_{\bn,\bnp}$ \emph{is singular}}{
\nl \eIf{$I_{\bn,\bnp}$ \emph{is similar to an existing entry of} $\mathbf{x}$}{
\nl Let $i(\bn,\bnp)$, $a(\bn,\bnp)$ and $b(\bn,\bnp)$ be such that \\ \mbox{\qquad\quad}$I_{\bn,\bnp}=a(\bn,\bnp)I_{\bm_{s,i(\bn,\bnp)},\bm_{s,i(\bn,\bnp)}'}+b(\bn,\bnp)$. \\
\nl Update $A_{n,i(\bn,\bnp)}=A_{n,i(\bn,\bnp)}-a(\bn,\bnp)$. \\
\nl Update $b_{n}=b_{n}+b(\bn,\bnp)$.
}
{\nl Add $I_{\bn,\bnp}$ as a new entry in $\mathbf{x}$. \\
\nl Increment $n_s=n_s+1$. \\
\nl Augment $A$ by an $n\times 1$ column of zeros. \\
\nl Set $A_{n,n_s}=-1$.
}
}
{\nl \eIf{$I_{\bn,\bnp}$ \emph{is similar to an existing entry of} $\mathbf{r}$}
{\nl Let $i(\bn,\bnp)$ and $a(\bn,\bnp)$ and $b(\bn,\bnp)$ be such that \\\mbox{\qquad\quad}$I_{\bn,\bnp}=a(\bn,\bnp)I_{\bm_{r,i(\bn,\bnp)},\bm_{r,i(\bn,\bnp)}'}+b(\bn,\bnp)$. \\
\nl Update $B_{n,i(\bn,\bnp)}=B_{n,i(\bn,\bnp)}+a(\bn,\bnp)$. \\
\nl Update $b_{n}=b_{n}+b(\bn,\bnp)$.
}
{\nl Add $I_{\bn,\bnp}$ as a new entry in $\mathbf{r}$. \\\nl Increment $n_r=n_r+1$.\\
\nl Augment $B$ by an $n\times 1$ column of zeros. \\
\nl Set $B_{n,n_r}=1$.
}
}
}
}
}}
\caption{Algorithm for deriving representation formulas for the integral \eqref{eq:integral}.}
\label{alg:Algorithm}
\end{center}
\end{algorithm2e}
\end{center}

\clearpage
\section{Examples}
\label{sec:examples}
We now report the results of applying Algorithm \ref{alg:Algorithm} to some standard examples. 

\subsection{Sierpinski triangle, Hausdorff measure}
\label{sec:Sierpinski}

\begin{figure}[t!]
\centering
\includegraphics[width=\textwidth]{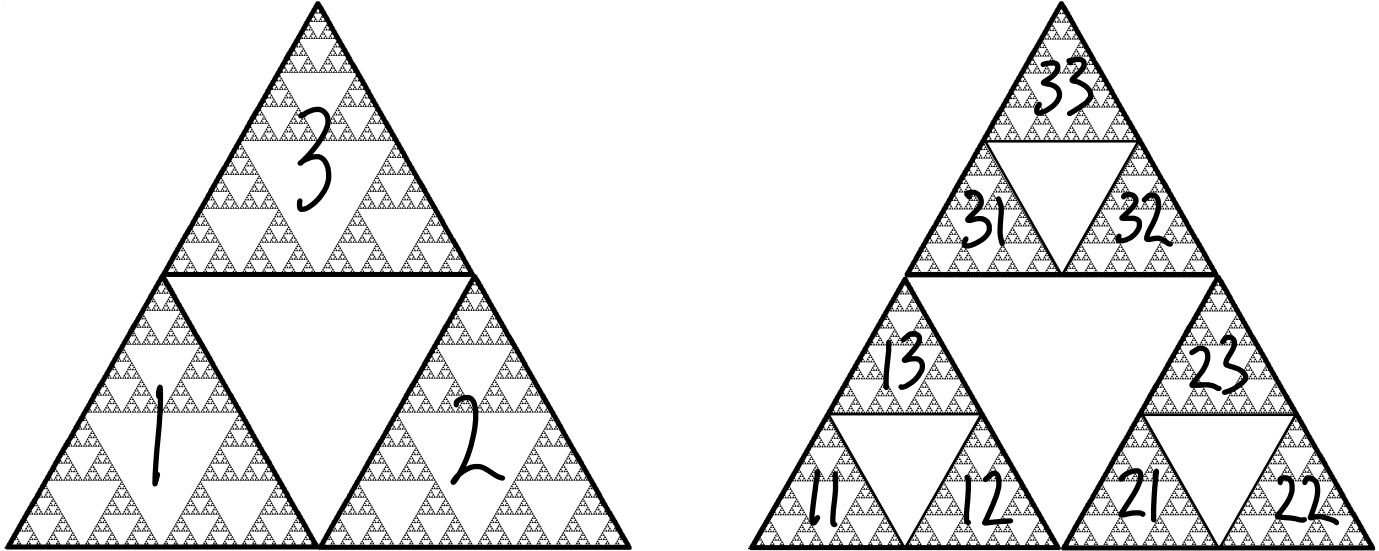}
\caption{Level 1 and 2 subsets of the Sierpinski triangle.}
\label{fig:SierpinskiTriangle}
\end{figure}

We first consider the case where $\Gamma\subset\R^2$ is the Sierpinski triangle, the attractor of the homogeneous IFS with $M=3$ and 
\[ s_1(x)=\frac12x, \quad s_2(x)=\frac12 x+\left(\frac12,0\right), \quad s_3(x)=\frac12x+\left(\frac14,\frac{\sqrt3}{4}\right),
\]
for which $d=\dimH(\Gamma)=\log{3}/\log2\approx 1.59$. The first two levels of subdivision of $\Gamma$ are illustrated in Figure \ref{fig:SierpinskiTriangle}. 
We shall assume for simplicity that $\mu=\mu'=\cH^d|_\Gamma$, %
so that $t_*=d$. 
{Since we are working with Hausdorff measure, as discussed in Remark \ref{rem:Isometries} the isometries $T$ under which $\mu$ is invariant are precisely those for which $T(\Gamma)=\Gamma$, which in this case are the elements of the dihedral group $D_3$ corresponding to the symmetries of the equilateral triangle.} 
The two subdivision strategies \eqref{eq:Strategy1} and \eqref{eq:Strategy2} coincide, since the IFS is homogeneous, and Algorithm \ref{alg:Algorithm} terminates after finding two fundamental singular sub-integrals, with $\bx=(I_{\Gamma,\Gamma},I_{1,2})^T$. %
The integral $I_{1,2}$ captures the interaction between neighbouring subsets of $\Gamma$ of the same size, intersecting at a point. 
The linear system \eqref{eq:LinearSystem} satisfied by these unknowns is:
\begin{align}
\label{}
\left(\begin{array}{cc}
\sigma_1 & -6 \\
0 & \sigma_2 
\end{array}\right)
\left(\begin{array}{c}
I_{\Gamma,\Gamma}\\ 
I_{1,2}
\end{array}\right)
= 
\left(\begin{array}{c}
0\\ 
R_{1,2}
\end{array}\right)
+ \left(\begin{array}{c}
b_1\\ 
b_2
\end{array}\right),
\end{align}
where 
\[\sigma_1 = 1-\frac{2^{t}}{3}\in\Big(0,\frac{2}{3}\Big],\quad \sigma_2 = 1-\frac{2^{t}}{9}\in\Big(\frac{2}{3},\frac{8}{9}\Big], \qquad t\in[0,d),
\]
\begin{align}
\label{}
\left(\begin{array}{c}
b_1\\ 
b_2
\end{array}\right)
=
\begin{cases}
(0,0)^T, & t\in(0,d),\\[3mm]
\displaystyle -\frac{\log 2\cH^d(\Gamma)^2}{3}\left(1, 
\frac{1}{27}
\right)^T, & t=0,
\end{cases}
\end{align}
and 
\[ R_{1,2}=3I_{11,21}+I_{11,22} + 2I_{11,23} + 2I_{12,23}\]
is the sum of the regular integrals arising from the decomposition of $I_{1,2}$ into level 2 subsets. In the notation of \S\ref{sec:Algorithm} we have
\[
A = \left(\begin{array}{cc}
\sigma_1 & -6 \\
0 & \sigma_2 
\end{array}\right),
\qquad
B=\left(\begin{array}{cccc}
0 & 0 & 0 & 0\\
3 & 1 & 2 & 2
\end{array}\right),
\]
and 
$\mathbf{r} = (I_{11,21}, I_{11,22}, I_{11,23}, I_{12,23})^T$. Then
\[
A^{-1} = \left(\begin{array}{cc}
\dfrac{1}{\sigma_1} & \dfrac{6}{\sigma_1\sigma_2} \\[2mm]
0 & \dfrac{1}{\sigma_2} 
\end{array}\right),
\]
and, solving the system, we obtain the representation formulas
\begin{align}
I_{1,2} = 
\frac{1}{\sigma_2}(R_{1,2}+b_2),
\qquad 
\label{eq:IGammaGamma_Sierpinski_final}
I_{\Gamma,\Gamma} = 
\frac{1}{\sigma_1}\left(\frac{6}{\sigma_2}(R_{1,2}+b_2)+b_1\right).
\end{align}

\subsection{Vicsek fractal, Hausdorff measure}

\begin{figure}[t!]
	\centering
	\includegraphics[width=0.9\textwidth]{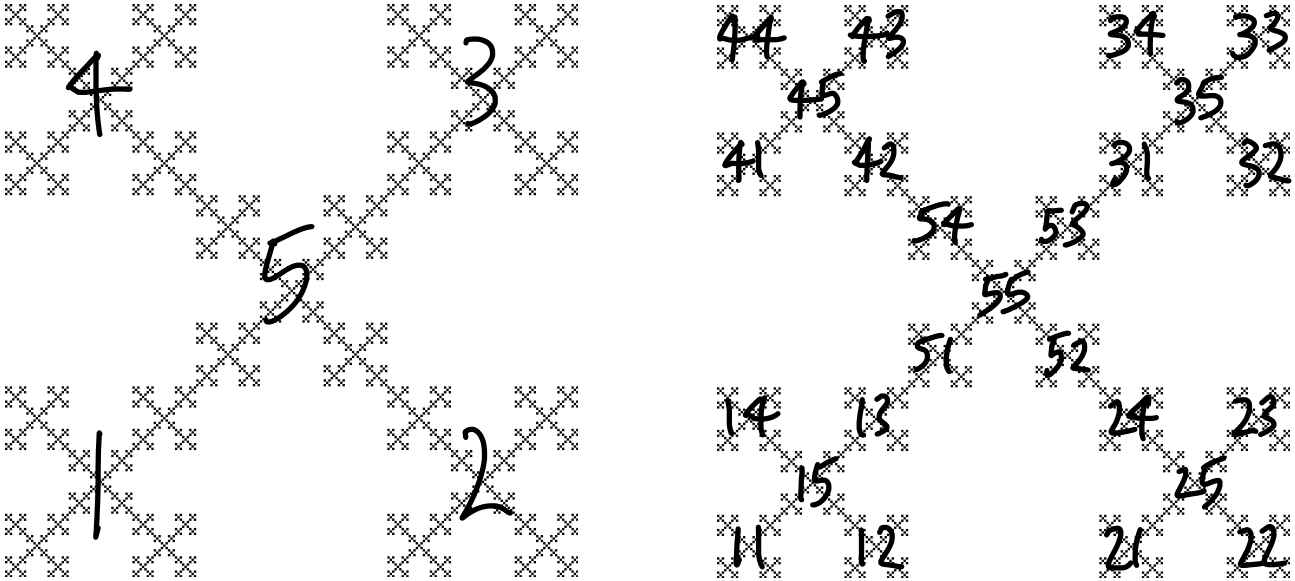}
	\caption{Level 1 and 2 subsets of the Vicsek fractal.}
	\label{fig:Vicsek}
\end{figure}

Next, we consider the case where $\Gamma\subset\R^2$ is the Vicsek fractal (shown in Figure \ref{fig:Vicsek}), the attractor of the homogeneous IFS with $M=5$ and
\begin{align*}
s_1(x)=&\frac13x, \quad s_2(x)=\frac13 x+\left(\frac23,0\right), \quad s_3(x)=\frac13x+\left(\frac23,\frac23\right),
\\ &s_4(x)=\frac13x+\left(0,\frac23\right),
\quad s_5(x)=\frac13x+\left(\frac13,\frac13\right),
\end{align*}
with $d=\dimH(\Gamma)=\log5/\log3\approx1.47$. Again we assume that $\mu=\mu'=\cH^d|_\Gamma$, %
so that $t_*=d$. 
{In this case the isometries $T$ under which $\mu$ is invariant are the elements of the dihedral group $D_4$ corresponding to the symmetries of the square.} 
The first two levels of subdivision of $\Gamma$ are illustrated in Figure \ref{fig:Vicsek}, from which it is clear that the situation is similar to that for the Sierpinski triangle of \S\ref{sec:Sierpinski}, as the only new singularities at level one are point singularities, which are similar (in the sense of \eqref{eq:SimilarityCond}) to those arising at level two. Again, our two subdivision strategies coincide because the IFS is homogeneous, and Algorithm \ref{alg:Algorithm} terminates after finding two fundamental singular sub-integrals, with $\bx=\left(I_{\Gamma,\Gamma},I_{1,5}\right)^T$.
For brevity we do not present the full linear system satisfied by these unknowns, but rather just report the matrix $A$ of \eqref{eq:LinearSystem}, which is
\[
A = \left(\begin{array}{cc}
\sigma_1 & -8 \\
0 & \sigma_2 
\end{array}\right),
\qquad \text{so that} \qquad
A^{-1} = \left(\begin{array}{cc}
\dfrac{1}{\sigma_1} & \dfrac{8}{\sigma_1\sigma_2} \\[2mm]
0 & \dfrac{1}{\sigma_2}
\end{array}\right)
,
\]
where
\[\sigma_1 = 1-\frac{3^t}{5}\in\Big(0,\frac{4}{5}\Big],\quad\sigma_2=1-\frac{3^t}{25}\in\Big(\frac{4}{5},\frac{24}{25}\Big],\quad t\in[0,d).
\]

\subsection{Sierpinski carpet, Hausdorff measure}
\label{sec:Carpet}

\begin{figure}[t!]
\centering
\includegraphics[width=\textwidth]{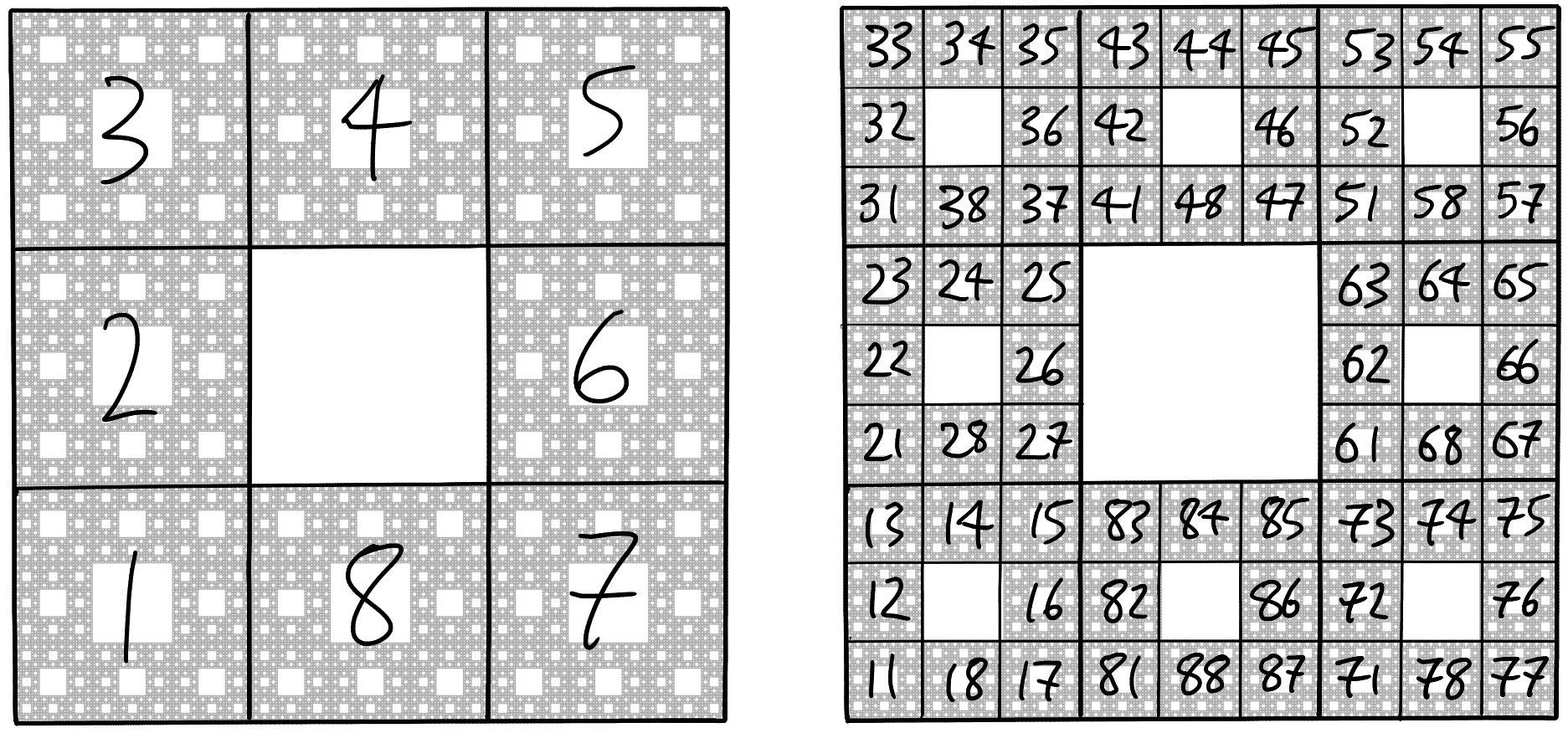}
\caption{Level 1 and 2 subsets of the Sierpinski carpet.}
\label{fig:SierpinskiCarpet}
\end{figure}

Next, we consider the case where $\Gamma\subset\R^2$ is the Sierpinski carpet, the attractor of the homogeneous IFS with $M=8$ and 
\[ s_1(x)=\frac13x, \quad s_2(x)=\frac13 x+\left(0,\frac13\right), \quad s_3(x)=\frac13x+\left(0,\frac23\right),\quad s_4(x)=\frac13x+\left(\frac13,\frac23\right),
\]
\[ s_5(x)=\frac13x+\left(\frac23,\frac23\right), \quad s_6(x)=\frac13 x+\left(\frac23,\frac13\right), \quad s_7(x)=\frac13x+\left(\frac23,0\right),\quad s_8(x)=\frac13x+\left(\frac13,0\right),
\]
for which $d=\dimH(\Gamma)=\log{8}/\log3\approx 1.89$. The first two levels of subdivision of $\Gamma$ are illustrated in Figure \ref{fig:SierpinskiTriangle}. 
We again assume that $\mu=\mu'=\cH^d|_\Gamma$, %
so that $t_*=d$. 
{As for the previous example, the isometries $T$ under which $\mu$ is invariant are the elements of the dihedral group $D_4$.} 
Again the two subdivision strategies \eqref{eq:Strategy1} and \eqref{eq:Strategy2} coincide, since the IFS is homogeneous, and now Algorithm \ref{alg:Algorithm} terminates after finding three fundamental singular sub-integrals, with $\mathbf{x} = (I_{\Gamma,\Gamma}, I_{1,2}, I_{2,4})^T$. The integrals $I_{1,2}$ and $I_{2,4}$ capture the interaction between neighbouring subsets of the same size, intersecting along a line segment and at a point, respectively. 
In this case, the matrix of \eqref{eq:LinearSystem} is:
\begin{align*}
\label{}
A = \left(\begin{array}{ccc}
\sigma_1 & -16 & -8 \\
0 & \sigma_2 & -\sigma_4 \\
0 & 0 & \sigma_3 
\end{array}\right),
\qquad \text{so that} \qquad
A^{-1}=\left(
\begin{array}{ccc}
	\dfrac{1}{\sigma_1} & \dfrac{16}{\sigma_1\sigma_2} & \dfrac{8\sigma_2+16\sigma_4}{\sigma_1\sigma_2\sigma_3} \\[2mm]
	0 & \dfrac{1}{\sigma_2} & \dfrac{\sigma_4}{\sigma_2\sigma_3} \\[2mm]
0 & 0 & \dfrac{1}{\sigma_3} \\
\end{array}
\right)
,
\end{align*}
where, for $t\in [0,d)$,
\[\sigma_1 = 1-\frac{3^t}{8}\in\Big(0,\frac{7}{8}\Big] , 
\quad \sigma_2 = 1-\frac{3.3^t}{64}\in\Big(\frac{5}{8},\frac{61}{64}\Big],
\quad \sigma_3 = 1-\frac{3^t}{64}\in\Big(\frac{7}{8},\frac{63}{64}\Big],
\quad \sigma_4 = \frac{3^t}{16}\in\Big(\frac{1}{16},\frac{1}{2}\Big].
\]

\subsection{Koch snowflake, Lebesgue measure}
\label{sec:Snowflake}

\begin{figure}[t!]
\centering
\includegraphics[width=\textwidth]{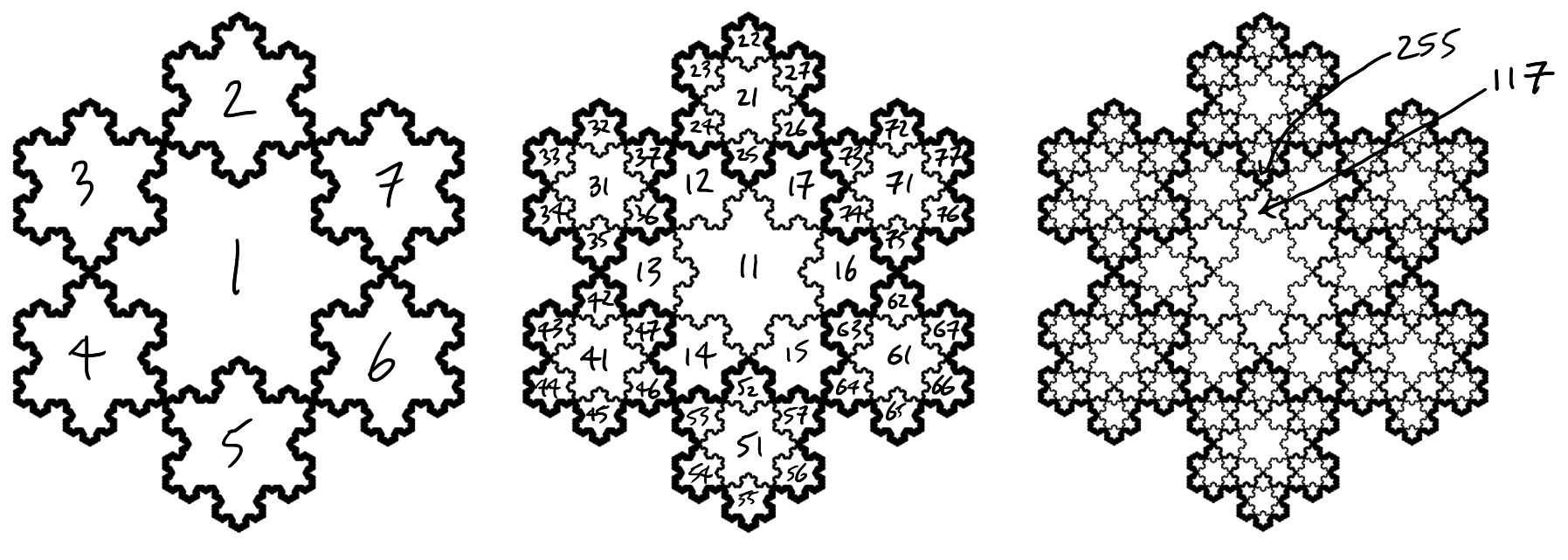}
\caption{The level one, two and three subdivisions of the Koch snowflake.}
\label{fig:KochSnowflake}
\end{figure}

Next, we consider the case where $\Gamma\subset\R^2$ is the Koch snowflake, the attractor of the non-homogeneous IFS with $M=7$ and
\[s_1(x)=\frac{1}{\sqrt{3}}A_1x, 
\quad A_1 = 
\left(\begin{array}{cc}
\frac{\sqrt{3}}{2} & -\frac{1}{2} \\
\frac{1}{2} & \frac{\sqrt{3}}{2}
\end{array}\right)
\quad \text{(anticlockwise rotation by }\pi/6\text{)},
\]
\[s_m(x)=\frac13x+\frac23(\cos\alpha_{m},\sin\alpha_{m}), \quad \alpha_m=\frac{(2m-1)\pi}6,\quad m=2,\ldots,7,\]
for which $d=\dimH(\Gamma)=2$. The first three levels of subdivision of $\Gamma$ are illustrated in Figure \ref{fig:KochSnowflake}. We assume that $\mu$ and $\mu'$ are both equal to the Lebesgue measure on $\R^2$, restricted to $\Gamma$,  %
so, again, $t_*=d$. 
{(As mentioned in Example \ref{ex:Hausdorff}, $\mu$ is proportional to $\cH^2|_\Gamma$.) 
The isometries $T$ under which $\mu$ is invariant are the elements of the dihedral group $D_6$ corresponding to the symmetries of the hexagon.} 
In this case both subdivision strategies produce a terminating algorithm, but with different results.

With Strategy 1 (subdividing both subsets) Algorithm \ref{alg:Algorithm} terminates after finding four fundamental singular sub-integrals, with $\mathbf{x} = (I_{\Gamma,\Gamma}, I_{1,2}, I_{2,3}, I_{11,25})^T$. 
The integral $I_{1,2}$ captures the interaction between neighbouring subsets of $\Gamma$, intersecting along a Koch curve. The integrals $I_{2,3}$ and $I_{11,25}$ both capture point interactions, but of different types: in $I_{2,3}$ the two interacting subsets are the same size, while in $I_{11,25}$ one is three times the diameter of the other. $I_{11,25}$ arises as a new fundamental sub-integral in the subdivision of $I_{1,2}$, but in the subdivision of $I_{11,25}$ one obtains just one singular sub-integral, $I_{117,255}$, which is similar to $I_{11,25}$, so the algorithm terminates. 

For brevity we do not report the resulting linear system satisfied by $(I_{\Gamma,\Gamma}, I_{1,2}, I_{2,3}, I_{11,25})^T$, but instead present the simpler result obtained with Strategy 2 (subdividing the subset with the largest diameter), for which 
Algorithm \ref{alg:Algorithm} terminates after finding only three fundamental singular sub-integrals, with $\mathbf{x} = (I_{\Gamma,\Gamma}, I_{1,2}, I_{2,3})^T$. With this strategy, in the subdivision of $I_{1,2}$ we subdivide only $\Gamma_1$, leaving $\Gamma_2$ intact, obtaining the edge interaction sub-integrals $I_{12,2}$ and $I_{17,2}$, both of which are similar to $I_{1,2}$, and the point interaction sub-integral $I_{11,2}$, which is similar to $I_{2,3}$.
The resulting linear system is
\begin{align}
\label{}
\left(\begin{array}{ccc}
\sigma_1 & -12 & -12 \\
0 & \sigma_2 & -1 \\
0 & 0 & \sigma_3 
\end{array}\right)
\left(\begin{array}{c}
I_{\Gamma,\Gamma}\\ 
I_{1,2}\\
I_{2,3}
\end{array}\right)
= 
\left(\begin{array}{c}
R_{\Gamma,\Gamma}\\ 
R_{1,2}\\
R_{2,3}
\end{array}\right)
+
\left(\begin{array}{c}
b_1\\ 
b_2\\
b_3
\end{array}\right),
\end{align}
where 
\[\sigma_1 = 1-\frac{3^{t/2}}{9} - \frac{2.3^t}{27}\in\Big(0,\frac{22}{27}\Big], 
\quad \sigma_2 = 1-\frac{2.3^{t/2}}{9}\in \Big(\frac{1}{3},\frac{7}{9}\Big],
\quad \sigma_3 = 1-\frac{3^t}{81}\in \Big(\frac{26}{27},\frac{80}{81}\Big],
\qquad t\in [0,d),\]
\begin{align}
\label{}
\left(\begin{array}{c}
b_1\\ 
b_2\\
b_3
\end{array}\right)
=
\begin{cases}
(0,0,0)^T, & t\in(0,d),\\[3mm]
\displaystyle -\frac{\log 3|\Gamma|^2}{27}\left(\frac{7}{2}, 
\frac{1}{9},
\frac{41}{13122}
\right)^T, & t=0,
\end{cases}
\end{align}
and
\[R_{\Gamma,\Gamma} = 12I_{2,4} + 6I_{2,5}, \qquad R_{1,2}=2I_{13,2} + 2I_{14,2},\]
\begin{align*}
\label{}
R_{2,3}%
=&\left(\frac{3^{t/2}}{9}+\frac{4.3^t}{81}\right)I_{2,4}
+\frac{2.3^t}{81}I_{2,5}+\frac{2.3^{t/2}}{9}I_{13,2}+\frac{4.3^{t/2}}{9}I_{14,2}+4I_{21,33}+2I_{21,34}
+6I_{22,32}
\\
& +2I_{22,33}+4I_{22,34}+2I_{22,35}+4I_{22,36}+6I_{22,37}
+4I_{23,34}+I_{27,34}
\end{align*}
are linear combinations of regular integrals.  %

Hence 
\[ A = \left(\begin{array}{ccc}
\sigma_1 & -12 & -12 \\
0 & \sigma_2 & -1 \\
0 & 0 & \sigma_3 
\end{array}\right),
\qquad \text{so that} \qquad
A^{-1} = \left(\begin{array}{ccc}
\dfrac{1}{\sigma_1} & \dfrac{12}{\sigma_1\sigma_2} & \dfrac{12(1+\sigma_2)}{\sigma_1\sigma_2\sigma_3} \\[2mm]
0 & \dfrac{1}{\sigma_2} & \dfrac{1}{\sigma_2\sigma_3} \\[2mm]
0 & 0 & \dfrac{1}{\sigma_3}
\end{array}\right),
\]
and solving the system gives
\[ \quad I_{2,3} = \frac{1}{\sigma_3}\Big(R_{2,3} + b_3\Big),
\quad I_{1,2} = \frac{1}{\sigma_2}\left(R_{1,2}+b_2+\frac{1}{\sigma_3}\Big(R_{2,3} + b_3\Big)\right),
\] 
and 
\begin{align}
\label{eq:IGammaGamma_Koch_final}
I_{\Gamma,\Gamma} = \frac{1}{\sigma_1}\left(R_{\Gamma,\Gamma} +b_1+\frac{12}{\sigma_2} \Big(R_{1,2}+b_2\Big)+\frac{12}{\sigma_3}\left(1+\frac{1}{\sigma_2}\right)\Big(R_{2,3} + b_3\Big)\right).
\end{align}

\subsection{$\Gamma=[0,1]$, including non-terminating examples}
\label{sec:Interval}
\begin{figure}[t!]
\centering
\includegraphics[width=.5\textwidth]{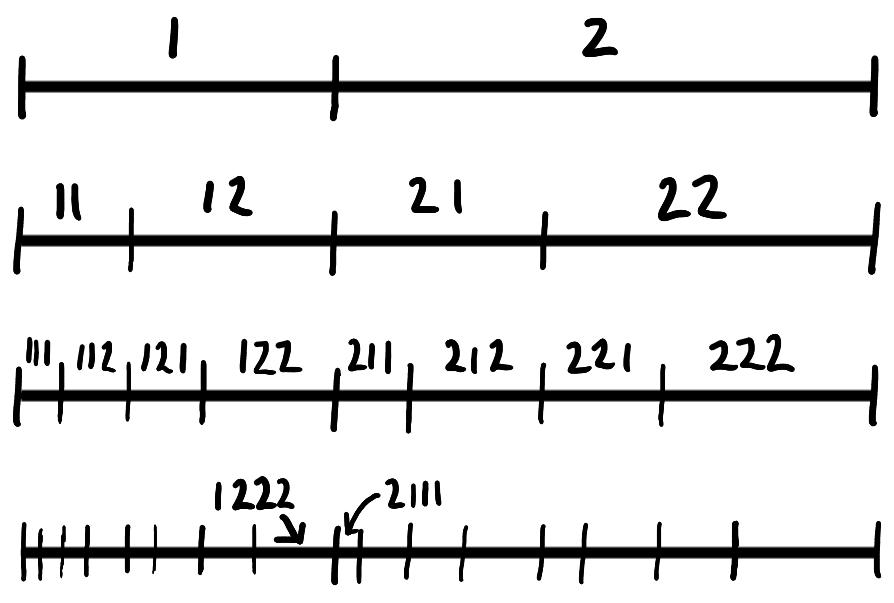}
\caption{The level one, two, three and four subdivisions of the interval $[0,1]$, viewed as the attractor of the IFS $\{s_1,s_2\}$ with $s_1(x)=\rho x$ and $s_2(x)=(1-\rho)x+\rho$ for some $\rho\in(0,1/2)$.}
\label{fig:Interval}
\end{figure}

We now consider 
a class of simple one-dimensional examples that illustrates the dependence of the output of Algorithm \ref{alg:Algorithm} on the choice of measures $\mu$ and $\mu'$, and the fact that it does not always terminate.

Given $\rho\in(0,1/2]$, consider the IFS $\{s_1,s_2\}$ in $\R$ with $s_1(x)=\rho x$ and $s_2(x)=(1-\rho)x+\rho$, for which $\Gamma=[0,1]$. 
The first four levels of subdivision of $\Gamma$ are illustrated in Figure \ref{fig:Interval} in the case $\rho\in(0,1/2)$. 
Let $\mu$ be the invariant measure on $\Gamma$ for some weights $p_1,p_2\in(0,1)$ with $p_1+p_2=1$, and let $\mu'=\mu$, so that $I_{\bm,\bmp}=I_{\bmp,\bm}$ for all $\bm,\bmp\in {\Sigma}$ by Proposition \ref{lem:Fubini}. For definiteness we assume the normalisation $\mu(\Gamma)=1$. 
If $p_1=\rho$ then $p_2=1-\rho$ and $\mu$ is Lebesgue measure restricted to $[0,1]$ {(recall Example \ref{ex:Hausdorff})}, which is invariant under the operation {$T_{\rm ref}$} of reflection with respect to the point $x=1/2$. If $p_1\neq\rho$ then $\mu$ is not Lebesgue measure and 
{the only isometry $T$ under which $\mu$ is invariant is the identity}.

If $\rho=1/2$ then the IFS is homogeneous, so  %
Strategy 1 and Strategy 2 coincide, and Algorithm \ref{alg:Algorithm} always terminates, for any %
$\mu$, finding 
just 2 fundamental singular sub-integrals, $I_{\Gamma,\Gamma}$ and $I_{1,2}$.

If $\rho\in(0,1/2)$ then the IFS is inhomogeneous, so Strategy 1 and Strategy 2 differ, and the outcome of Algorithm \ref{alg:Algorithm} depends on the choice of strategy, and on the measure $\mu$. We consider four cases:
\begin{itemize}
\item \underline{Case 1: Lebesgue measure, Strategy 1}
 
Algorithm \ref{alg:Algorithm} terminates, finding 3 fundamental singular sub-integrals, $I_{\Gamma,\Gamma}$, $I_{1,2}$ and $I_{12,21}$.\footnote{{The matrix $A$ arising in \eqref{eq:LinearSystem} in this case, with $\mathbf{x} = (I_{\Gamma,\Gamma}, I_{1,2}, I_{12,21})^T$,  is given by
\begin{align}
\label{eq:AInterval}
A = \left(\begin{array}{ccc}
1-\omega_1-\omega_2 & -2 & 0 \\
0 & 1 & -1 \\
0 & -\omega_1\omega_2 & 1
\end{array}\right),
\end{align}
where $\omega_1 = \rho^{2-t}$ and $\omega_2 = (1-\rho)^{2-t}$. Noting that $\det(A)=(1-\omega_1-\omega_2)(1-\omega_1\omega_2)$, one can check that $A$ is invertible for all $(\rho,t)\in (0,1/2]\times[0,2)$.
}}
($I_{122,211}$ is similar to $I_{2,1}=I_{1,2}$ in this case, {taking $T=T'=T_{\rm ref}$. This is an example where, if the non-trivial isometry $T_{\rm ref}$ had not been identified, the algorithm would not have terminated - cf.\ Case 3 below.)} 
\item \underline{Case 2: Lebesgue measure, Strategy 2}

If $\rho=\rho_*:=\frac{3-\sqrt{5}}{2}\approx 0.38$, the unique positive solution of $\rho=(1-\rho)^2$, Algorithm \ref{alg:Algorithm} terminates with two fundamental singular sub-integrals $I_{\Gamma,\Gamma}$ and $I_{1,2}$. (In this case $I_{1,21}$ is similar to $I_{2,1}=I_{1,2}$, {again taking $T=T'=T_{\rm ref}$}.) 

If $\rho\in(0,\rho_*)\cup(\rho_*,1/2)$ then Algorithm \ref{alg:Algorithm} terminates with four fundamental singular sub-integrals $I_{\Gamma,\Gamma}$, $I_{1,2}$, $I_{1,21}$ and $I_{12,21}$. (The subdivision of $I_{12,21}$ leads to $I_{122,211}$, which 
is similar to $I_{2,1}=I_{1,2}$, as in Case 1.)
\item \underline{Case 3: Non-Lebesgue measure, Strategy 1}:

In this case Algorithm \ref{alg:Algorithm} does not terminate, since we encounter an infinite sequence of fundamental singular sub-integrals 
\begin{align}
\label{eq:SeqNoTerminate}
I_{\Gamma,\Gamma}, I_{1,2},  I_{12,21}, I_{122,211}, I_{1222,2111},I_{12222,21111} \ldots,
\end{align}
none of which is found to be similar to any other. %
To see this, note that for a sub-integral $\Gamma_{\bm,\bmp}$ in this sequence, with $\bm=(1,2,\ldots ,2)$ ($k$ 2's) and $\bmp=(2,1,\ldots ,1)$ ($k$ 1's), we have  
\begin{align}
\label{eq:SeqRatios}
\frac{\rho_\bm}{\rho_\bmp}=\frac{\rho(1-\rho)^k}{\rho^k(1-\rho)} = \left(\frac{1-\rho}{\rho}\right)^{k-1}=:R_{k}.
\end{align}
Then since $0<\rho<1-\rho<1$, the sequence $(R_k)_{k=0}^\infty$ is monotonically increasing, with $R_k\geq 1$ for $k\geq 1$. This implies that \eqref{eq:Cond2Nec} (and hence \eqref{eq:Cond2}) is not satisfied by any pair of elements of the sequence \eqref{eq:SeqNoTerminate}, {except for $I_{1,2}$ and $I_{122,211}$. However, since $\mu$ is not Lebesgue measure, $I_{1,2}$ and $I_{122,211}$ are not found to be similar, because $\mu$ is not invariant under $T_{\rm ref}$ (so one cannot use it in Proposition \ref{lem:SimilaritySufficient}), and \eqref{eq:Cond1} fails with $T=T'$ the identity. 
}

\item \underline{Case 4: Non-Lebesgue measure, Strategy 2}: 

In this case, Algorithm \ref{alg:Algorithm} can only terminate if $\rho$ is a solution of a polynomial equation
\begin{align}
\label{eq:RhoCond}
(1-\rho)^{j}\rho^k=1, 
\quad \text{ or } \quad 
(1-\rho)^{j}=\rho^k,
\end{align}
for some $j,k\in\N_0$ with either $j>0$ or $k>0$. In particular, Algorithm \ref{alg:Algorithm} does not terminate if $\rho$ is transcendental. 
To see that \eqref{eq:RhoCond} is necessary for termination of the algorithm, we note that, in the subdivision of $I_{1,2}$, Strategy 2 will produce pairs of subsets of $\Gamma$ (intervals) that intersect at the point $x=\rho$, and the lengths of the intervals in each pair will be in the ratio $\rho(1-\rho)^j:\rho^k(1-\rho)$ for some $j,k\in\N_0$. For the sub-integrals associated to any two distinct pairs of such intervals to be found to be similar, the ratio of their lengths must coincide (by \eqref{eq:Cond2Nec}), implying that 
\[\frac{\rho(1-\rho)^{j_1}}{\rho^{k_1}(1-\rho)}=\frac{\rho(1-\rho)^{j_2}}{\rho^{k_2}(1-\rho)}, \qquad \text{or, equivalently, }\,\,\,\,(1-\rho)^{j_1-j_2}\rho^{{k_2-k_1}}=1,\]
for some $j_1,k_1,j_2,k_2\in\N_0$ with either $j_1\neq j_2$ or $k_1\neq k_2$, giving \eqref{eq:RhoCond}. 

If $\rho$ is the solution of a polynomial \eqref{eq:RhoCond} then Algorithm \ref{alg:Algorithm} may terminate, but the number of fundamental singular sub-integrals encountered will depend on $\rho$. 
For instance, if $\rho=\rho_*$ the algorithm terminates with four fundamental singular sub-integrals $I_{\Gamma,\Gamma}$, $I_{1,2}$, $I_{1,21}$ and $I_{12,21}$ (since in this case $I_{122,211}$ is similar to $I_{1,21}$). 
If $\rho=\rho_{**}\approx 0.43$, defined to be the unique positive solution of $\rho^2=(1-\rho)^3$, or $\rho=\rho_{***}\approx 0.32$, defined to be the unique positive solution of $\rho=(1-\rho)^3$, the algorithm terminates with five fundamental singular sub-integrals $I_{\Gamma,\Gamma}$, $I_{1,2}$, $I_{1,21}$, $I_{12,21}$ and $I_{122,211}$ (since in these cases $I_{1222,211}$ is similar to $I_{1,2}$ and $I_{1,21}$ respectively). For $\rho\in(0,1/2)\setminus\{\rho_*,\rho_{**},\rho_{***}\}$, if the algorithm does terminate it will find at least six fundamental singular sub-integrals, since then none of $I_{\Gamma,\Gamma}$, $I_{1,2}$, $I_{1,21}$, $I_{12,21}$, $I_{122,211}$ and $I_{1222,211}$ are found to be similar to each other. 

\end{itemize}

\section{Numerical quadrature and error estimates}
\label{sec:Quadrature} 

Once Algorithm \ref{alg:Algorithm} has been applied, and the system \eqref{eq:LinearSystem} has been solved, producing a representation formula for the singular integral $I_{\Gamma,\Gamma}$ in terms of regular sub-integrals, a numerical approximation of $I_{\Gamma,\Gamma}$ can be obtained by applying a suitable quadrature rule to the regular sub-integrals. Let us call the resulting approximation $Q_{\Gamma,\Gamma}$. We discuss some possible choices of quadrature rule below. But first we make a general comment on the error analysis of such approximations. 
Suppose that the quadrature rule chosen can compute each of the regular sub-integrals in the vector $\mathbf{r}$ with absolute error $\leq E$ for some $E\geq0$. Then the absolute quadrature error in computing $I_{\Gamma,\Gamma}$ using the representation formula \eqref{eq:LinearSystem} can be bounded by
\begin{align}
\label{eq:QuadError}
|I_{\Gamma,\Gamma}-Q_{\Gamma,\Gamma}| \leq \|A^{-1}\|_\infty\|B\|_\infty E.
\end{align}
The constant $\|A^{-1}\|_\infty\|B\|_\infty$ depends on the problem at hand, and is expected to blow up as $t\to t_*$. Indeed, for the examples in \S\ref{sec:Sierpinski}-\S\ref{sec:Snowflake} (for which $t_*=d=\dimH(\Gamma)$) one can check %
that 
\[ \|A^{-1}\|_\infty\|B\|_\infty \leq \frac{C}{d-t}, \qquad t\in[0,d),\]
for some constant $C>0$, independent of $t$. This follows from the fact that in all these examples the constant $\sigma_1=O(d-t)$ as $t\to d$, while $\sigma_2$, $\sigma_3$ etc.\ remain bounded away from zero in this limit.

We now return to the choice of quadrature rule for the approximation of the regular sub-integrals in $\mathbf{r}$, which are double integrals of smooth functions over pairs of self-similar subsets of $\Gamma$ with respect to a pair of invariant measures $\mu$ and $\mu'$. We shall restrict our attention to tensor product quadrature rules, so that it suffices to consider methods for evaluating a single integral of a smooth function over a single self-similar subset $\Gamma_\bm$ of $\Gamma$ with respect to a single invariant measure $\mu$. In fact, it is enough to consider the case $\Gamma_\bm = \Gamma$, since the more general case can then be treated using \eqref{eq:cov_general}. Hence we consider quadrature rules for the evaluation of the integral
\begin{equation}\label{eq:singleint}
	I[f;\mu]:=\int_\Gamma f~\rd\mu,
	\end{equation}
for an integrand $f$ that is smooth in a neighbourhood of $\Gamma$. 
We consider three types of quadrature:
\begin{itemize}
\item \textbf{Gauss rules:} Highly accurate, but currently only practically applicable for the case $n=1$, i.e.\ $\Gamma\subset \R$. \footnote{Clenshaw-Curtis rules have also been studied in this context (see e.g.\ \cite{CaCo:09}), but since Gauss and Clenshaw-Curtis rules typically converge at a similar rate (see e.g.\ \cite{Tr:08} for the classical case), we shall for brevity restrict our attention to the discussion of Gauss rules here.}
\item \textbf{Composite barycentre rules:} Less accurate than Gauss rules, but can be applied to $\Gamma\subset\R^n$ for $n>1$. 
\item \textbf{Chaos game rules:} Monte-Carlo type rules which converge (in expectation) at a relatively slow but dimension-independent rate, which makes them well-suited to high-dimensional problems (large $d=\dimH(\Gamma)$). 
\end{itemize}

In the following three sections we provide further details of these methods, and any theory supporting them, before comparing their performance numerically in \S\ref{sec:Numerics}.

\subsection{Gauss rules in the case $\Gamma\subset\R$}\label{sec:gauss}

In general, $N$-point Gauss rules require the existence of a set of polynomials $\{p_j\}_{j=0}^N$, orthogonal with respect to the measure $\mu$. A sufficient condition for the existence of such polynomials is positivity of the Hankel determinant, which in the case of self-similar invariant measures, is implied by $\supp\mu=\Gamma$ having infinitely many points \cite[\S1.1]{Ga:90}.
We then define the $N$-point Gauss rule on $\Gamma$ as
\begin{equation}\label{eq:Gaussdef}
Q^{\mathrm{G}}_N[f;\mu] := \sum_{j=1}^N w_jf(x_j),
\end{equation}
where $x_j$, $j=1,\ldots,N$, are the zeros of $p_N$. Gauss rules are interpolatory, so the weights (also known as Christoffel numbers) may be defined by $w_j:=\int_\Gamma \ell_j(x)\rd\mu(x)$, where $\ell_j$ is the $j$th corresponding Lagrange polynomial (see e.g \cite[(5.3)]{Tr:13}). The weights are positive - see for example \cite[Theorem~1.46]{Ga:04}, which generalises to any positive measure $\mu$.

For classical $\mu$, a range of algorithms (see e.g. \cite{HaTo:13,ToTrOl:16}) exist for efficient $O(N)$ computation of the weights and nodes in \eqref{eq:Gaussdef}. However, standard approaches involving polynomial sampling break down for singular measures \cite{Ma:96,Ma:00}. This presents an obstacle for the evaluation of \eqref{eq:singleint} in our context of self-similar invariant measures, which are in general singular when $\dimH(\Gamma)\neq n$. %
However, in the special case $n=1$, where $\Gamma\subset\R$, this issue can be overcome by applying the stable Stieltjes technique proposed in \cite[\S5]{Ma:96}\footnote{There is an error in \cite[Equation (32)]{Ma:96}: %
	\begin{align}
	\label{}
	\sum_{m=0}^{n-1}\Gamma^{n}_{i,m}\Gamma^n_{i,m+1}\delta_i(r_m+r_{m+1})\quad
	\text{ should be replaced by }\quad2\sum_{m=0}^{n-1}\delta_i\Gamma^n_{i,m}\Gamma^n_{i,m+1}r_{m+1}. 	
	\end{align}
}. 
It seems that a stable and efficient algorithm for the evaluation of Gauss rules for the case where $\Gamma\subset \R^n$, $n>1$, has not yet been developed.
Hence in this paper we only consider Gauss rules for the case where $\Gamma\subset\R$.

The error analysis for the Gauss rule follows the standard approach, giving the usual exponential convergence as $N\to\infty$. 
{In the following, $\Hull(\Gamma)$ denotes the convex hull of $\Gamma$.}
\begin{thm}
\label{thm:Gauss}
Let $\Gamma\subset \R$ be an IFS attractor and let $\mu$ be a self-similar measure supported on $\Gamma$. If $f$ is analytic in a neighbourhood of $\Hull(\Gamma)\subset\R$, then
	\[
	\left|I[f;\mu]-Q^{\mathrm{G}}_N[f;\mu]\right| \leq C\re^{-cN}, \qquad N\in\N,
	\]
for some constants $C>0$ and $c>0$, independent of $N$.
\end{thm}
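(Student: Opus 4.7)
The plan is to follow the standard Gauss quadrature error analysis, which requires only two ingredients: exactness on polynomials of degree $\le 2N-1$, and positivity of the weights (with $\sum_j w_j = \mu(\Gamma)$). Both are already in hand: exactness is built into the construction of $Q^{\mathrm{G}}_N[\,\cdot\,;\mu]$ via the orthogonal polynomials $\{p_j\}$, and positivity of the $w_j$ was noted just before the theorem statement (the Christoffel numbers are positive for any positive measure). Since the constant function $1$ is integrated exactly, $\sum_{j=1}^N w_j = \mu(\Gamma)$.

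First I would record the basic estimate: for any polynomial $p$ of degree $\le 2N-1$,
\begin{align*}
\bigl|I[f;\mu]-Q^{\mathrm{G}}_N[f;\mu]\bigr|
&= \bigl|I[f-p;\mu]-Q^{\mathrm{G}}_N[f-p;\mu]\bigr| \\
&\le \mu(\Gamma)\,\|f-p\|_{L^\infty(\Hull(\Gamma))} + \sum_{j=1}^N w_j\,\|f-p\|_{L^\infty(\Hull(\Gamma))} \\
&= 2\mu(\Gamma)\,\|f-p\|_{L^\infty(\Hull(\Gamma))},
\end{align*}
where I used that the Gauss nodes $x_j$ lie in $\Hull(\Gamma)$ (a standard fact: zeros of orthogonal polynomials for a measure supported in a compact interval lie in the interior of the convex hull of the support). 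Thus the error is at most $2\mu(\Gamma)$ times the best polynomial approximation error $E_{2N-1}(f):=\inf_{\deg p\le 2N-1}\|f-p\|_{L^\infty(\Hull(\Gamma))}$.

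Second, I would invoke the classical Bernstein theorem. Because $\Gamma\subset\R$ is compact, $\Hull(\Gamma)=[a,b]$ is a bounded closed interval. Since $f$ is analytic in some open neighbourhood of $[a,b]$, there exists a Bernstein ellipse $E_\rho$ (with foci $a,b$ and parameter $\rho>1$) contained in that neighbourhood on which $f$ is bounded. Bernstein's theorem then gives $E_k(f)\le C'\rho^{-k}$ for all $k\in\N$. Applying this with $k=2N-1$ and combining with the estimate from the previous paragraph yields
\begin{equation*}
\bigl|I[f;\mu]-Q^{\mathrm{G}}_N[f;\mu]\bigr| \le 2\mu(\Gamma)\,C'\,\rho^{-(2N-1)} \le C\mathrm{e}^{-cN},
\end{equation*}
with $c=2\log\rho>0$, as required.

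There is essentially no hard step: the argument is the textbook one (cf.\ Trefethen, \emph{Approximation Theory and Approximation Practice}, Chs.~8 and 19) and carries over verbatim because the only properties of $\mu$ used are that it is a positive finite Borel measure supported on the compact interval $\Hull(\Gamma)$ with infinitely many points in its support (to guarantee existence of $N$ distinct Gauss nodes for each $N$). The self-similar/fractal nature of $\mu$ plays no role in the convergence rate; it only affects the constants $C$ and $c$ implicitly through $\mu(\Gamma)$ and the location of $\Hull(\Gamma)$. The only item worth double-checking is the containment of Gauss nodes in $\Hull(\Gamma)$, which follows from the standard three-term recurrence/Jacobi matrix argument for any positive measure with compact support.
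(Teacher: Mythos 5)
Your argument is correct and takes essentially the same route as the paper's own proof: subtract the best degree-$2N-1$ polynomial approximation, use exactness of the Gauss rule together with positivity of the weights (so $\sum_j w_j=\mu(\Gamma)$) to bound the error by $2\mu(\Gamma)\,\|f-p_{2N-1}^*\|_{L^\infty(\Hull(\Gamma))}$, and then invoke the classical Bernstein-ellipse estimate for analytic functions (the paper cites \cite[Theorem~8.2]{Tr:13} for exactly this step). Your explicit observation that the Gauss nodes lie in $\Hull(\Gamma)$ is a detail the paper leaves implicit, but otherwise the two proofs coincide.
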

\begin{proof}
	Denote by $p_{2N-1}^*$ the $L^\infty(\Hull(\Gamma))$-best approximation to $f$, over the space of polynomials of degree $2N-1$. By linearity and the exactness property \cite[(1.17)]{Ga:90}, we can write
	\begin{align*}
	\left|I[f;\mu]-Q^{\mathrm{G}}_N[f;\mu]\right| &= \left|I[f-p_{2N-1}^*;\mu]-Q^{\mathrm{G}}_N[f-p_{2N-1}^*;\mu]\right| \\
	&\leq \left|I[f-p_{2N-1}^*;\mu]\right|+\left|Q^{\mathrm{G}}_N[f-p_{2N-1}^*;\mu]\right|\\
	&\leq \|f-p_{2N-1}^*\|_{L^\infty(\Hull(\Gamma))}\Big(\mu(\Gamma) + \sum_{j=1}^N|w_j|\Big).
	\end{align*}
	Since the weights are positive, $ \sum_{j=1}^N|w_j|= \sum_{j=1}^Nw_j = \mu(\Gamma)$. The result then follows by applying classical approximation theory estimates to $\|f-p_{2N-1}^*\|_{L^\infty(\Hull(\Gamma))}$, for example \cite[Theorem~8.2]{Tr:13}.
\end{proof}

\subsection{The composite barycentre rule}\label{sec:bary}
The basic idea of the composite barycentre rule (for more detail see \cite{HausdorffQUAD}) is to partition $\Gamma$ into a union of self-similar subsets of approximately equal diameter, then to approximate $f$ on each subset by its (constant) value at the barycentre of each subset.  %
Given a maximum mesh width $h>0$, we define a partition of $\Gamma$ using the 
following set of indices:
\begin{align}
\label{eq:Lh_def}
L_{h}(\Gamma) := \big\{&\bm=(m_1,\ldots,m_\ell)\in {\Sigma} :
 \diam(\Gamma_{\bm})\leq h \text{ and } \diam(\Gamma_{(m_1,\ldots,m_{\ell-1})})>h\big\}.\end{align}
The composite barycentre rule is then defined as
\begin{equation}\label{eq:bary}
Q_h^{\mathrm{B}}[f;\mu] := \sum_{\bm\in L_h(\Gamma)} w_\bm f(x_\bm),
\end{equation}
where the weights and nodes are defined by $w_\bm := \mu(\Gamma_\bm)$ and $x_\bm:={\int_{\Gamma_\bm} x~\rd\mu(x)}/{\mu(\Gamma_\bm)}$ respectively. The weights and nodes can be computed using simple formulas involving the IFS parameters of \eqref{eq:affine}, as (see \cite[(28)-(30)]{HausdorffQUAD}, and recall \eqref{eq:sim_measure_general})
\begin{align}
\label{eq:weightsnodes}
w_{\bm}= p_\bm\mu(\Gamma), \qquad
x_{\bm} = s_{\bm}(x_\Gamma), %
\end{align}
with 
\begin{align}
\label{eq:xGamma_formula}
x_\Gamma:=\frac{\int_{\Gamma}x\,\rd\mu(x)}{\int_{\Gamma}\rd \mu(x)}=\Bigg(\mathrm{I} - \sum_{m=1}^M p_m\rho_mA_{m}\Bigg)^{-1}\bigg(\sum_{m=1}^M p_m\delta_m\bigg),
\end{align}
where $\mathrm{I}$ is the $n\times n$ identity matrix and $\rho_m$, $A_m$ and $\delta_m$, $m=1,\ldots,M$, are as in \eqref{eq:affine}.

The error analysis of the composite barycentre rule follows a standard Taylor series approximation argument. The following is a simplified version of results in \cite{HausdorffQUAD}. 
\begin{thm}[{\cite[Theorem~3.6 and Remark~3.9]{HausdorffQUAD}}]
\label{thm:Barycentre}
Let $\Gamma\subset \R^n$ be an IFS attractor and let $\mu$ be a self-similar measure supported on $\Gamma$. 
\begin{enumerate}[(i)]
\item If $f$ is Lipschitz continuous on $\Hull(\Gamma)$ then
\[
\left|I[f;\mu]-Q^{\mathrm{B}}_h[f;\mu]\right| \leq Ch, \qquad h>0,
\]
for some $C>0$ independent of $h$. 
\item 
If $f$ is differentiable in a neighbourhood of $\Hull(\Gamma)$, and its gradient is Lipschitz continuous on $\Hull(\Gamma)$ then 
\[
\left|I[f;\mu]-Q^{\mathrm{B}}_h[f;\mu]\right| \leq Ch^2, \qquad h>0,
\]
for some $C>0$ independent of $h$.
\end{enumerate}
If the IFS defining $\Gamma$ is homogeneous then $h$ and $h^2$ on the right-hand sides of the above estimates can be replaced by $N^{-1/d}$ and $N^{-2/d}$ respectively, where $N:=|L_h(\Gamma)|$.
\end{thm}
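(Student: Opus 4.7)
The plan is to combine a partition-based decomposition of the integration error with Taylor-type expansions around the subset barycentres $x_\bm$, exploiting the key defining property that $\int_{\Gamma_\bm}(x-x_\bm)\,\rd\mu(x)=0$.

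First I would justify that $\{\Gamma_\bm\}_{\bm\in L_h(\Gamma)}$ partitions $\Gamma$ up to a $\mu$-null overlap. This uses the fact that $L_h(\Gamma)$ is a finite collection of indices whose descending subtrees in $\Sigma$ exhaust $\Sigma$ without redundancy, together with the OSC consequence $\mu(s_m(\Gamma)\cap s_{m'}(\Gamma))=0$ for $m\neq m'$, applied iteratively via the self-similarity identity \eqref{eq:cov_general}. Combining this with the weight formula $w_\bm=\mu(\Gamma_\bm)$ in \eqref{eq:weightsnodes} yields the error representation
\[
I[f;\mu]-Q_h^{\mathrm{B}}[f;\mu]=\sum_{\bm\in L_h(\Gamma)}\int_{\Gamma_\bm}\bigl(f(x)-f(x_\bm)\bigr)\,\rd\mu(x).
\]

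For part (i), Lipschitz continuity of $f$ on $\Hull(\Gamma)$ with constant $L_f$ gives $|f(x)-f(x_\bm)|\leq L_f|x-x_\bm|\leq L_f\diam(\Gamma_\bm)\leq L_f h$ for all $x\in\Gamma_\bm$. Summing over $\bm\in L_h(\Gamma)$ and using $\sum_\bm w_\bm=\mu(\Gamma)$ yields the $O(h)$ bound. For part (ii), the crucial observation is the vanishing first moment
\[
\int_{\Gamma_\bm}\nabla f(x_\bm)\cdot(x-x_\bm)\,\rd\mu(x)=\nabla f(x_\bm)\cdot\int_{\Gamma_\bm}(x-x_\bm)\,\rd\mu(x)=0,
\]
which follows directly from the definition of $x_\bm$ as the barycentre of $\mu|_{\Gamma_\bm}$. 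Combining this with the standard Taylor remainder estimate $|f(x)-f(x_\bm)-\nabla f(x_\bm)\cdot(x-x_\bm)|\leq \tfrac{1}{2}L_\nabla|x-x_\bm|^2$, which follows from the integral form of the mean value theorem applied to $t\mapsto f(x_\bm+t(x-x_\bm))$ using the assumed Lipschitz continuity of $\nabla f$ with constant $L_\nabla$, we obtain
\[
\biggl|\int_{\Gamma_\bm}(f(x)-f(x_\bm))\,\rd\mu(x)\biggr|\leq \tfrac{1}{2}L_\nabla h^2\,\mu(\Gamma_\bm),
\]
and summing again gives the $O(h^2)$ bound.

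For the homogeneous refinement, if $\rho_m=\rho$ for all $m$ then $\diam(\Gamma_\bm)=\rho^{|\bm|}\diam(\Gamma)$, so $L_h(\Gamma)$ consists of all $\bm\in\Sigma_\ell$ for the unique $\ell\in\N$ with $\rho^\ell\diam(\Gamma)\leq h<\rho^{\ell-1}\diam(\Gamma)$. Hence $N=|L_h(\Gamma)|=M^\ell$ and $h\leq C\rho^\ell$; using $M=\rho^{-d}$ from \eqref{eq:dfirst} gives $\rho^\ell=M^{-\ell/d}=N^{-1/d}$, and the replacements $h\rightsquigarrow N^{-1/d}$ and $h^2\rightsquigarrow N^{-2/d}$ follow. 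The argument is largely routine; the only subtle point is ensuring the partition property of $L_h(\Gamma)$ up to $\mu$-null sets, but this is immediate from the OSC and self-similarity of $\mu$.
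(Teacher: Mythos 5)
Your proposal is correct and follows essentially the same route as the paper's source argument (the paper cites the proof to its predecessor and describes it as a standard Taylor-series argument): partition via $L_h(\Gamma)$ up to $\mu$-null overlaps, a first-order Lipschitz bound for $O(h)$, and the vanishing first moment $\int_{\Gamma_\bm}(x-x_\bm)\,\rd\mu(x)=0$ at the barycentre combined with the Taylor remainder for $O(h^2)$, plus the count $N=M^\ell$ with $M=\rho^{-d}$ in the homogeneous case. No gaps of substance.
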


\subsection{Chaos game quadrature}\label{sec:chaos}

Chaos game quadrature, described, e.g., in \cite[(3.22)--(3.23)]{forte1998chaos} and \cite[\S~6.3.1]{kunze2011fractal}, is a Monte-Carlo type approach, defined by the following procedure:
\begin{enumerate}[(i)]
	\item Choose some $x_0\in\mathbb{R}^n$, e.g.\ $x_0=x_\Gamma$, the barycentre of $\Gamma$; %
	\item Select a realisation of the sequence $\{m_j\}_{j\in\mathbb N}$ of i.i.d.\ random variables taking values in $\{1,\ldots,M\}$ with probabilities $\{p_1,\ldots,p_M\}$;
	\item Construct the stochastic sequence $x_j=s_{m_j}(x_{j-1})$ for $j\in\mathbb N$;
	\item 
For a given $N\in\N$, define the chaos game quadrature approximation by
\begin{equation}\label{eq:Chaos}
Q^{\mathrm{C}}_N[f;\mu] := \frac{1}{N}\sum_{j=1}^N f(x_j).
\end{equation}
\end{enumerate}
For continuous $f$, the chaos game rule \eqref{eq:Chaos} will converge to \eqref{eq:singleint} with probability one (see the arguments in the appendix of \cite{forte1998chaos}).
While no error estimates were provided in \cite{forte1998chaos} or \cite{kunze2011fractal}, in the numerical experiments of \cite[\S6]{HausdorffQUAD} and \S\ref{sec:Numerics} below, convergence in expectation was observed at a rate consistent with an estimate of the form
 \[
 \mathbb{E}\left[\left|I[f;\mu]-Q^{\mathrm{C}}_N[f;\mu]\right|\right]\leq CN^{-1/2}, \qquad N\in\N.
 \]

\section{Numerical results and applications}
\label{sec:Numerics} 
Algorithm \ref{alg:Algorithm}, and the quadrature approximations described in \S\ref{sec:Quadrature}, have been implemented in the open-source Julia code \verb|IFSIntegrals|, available at \url{www.github.com/AndrewGibbs/IFSintegrals}. In this section we present numerical results illustrating the accuracy of our approximations, comparing different quadrature approaches, and applying our method in the context of a boundary element method for acoustic scattering by fractal screens.

\subsection{Sierpinski triangle, Vicsek fractal, Sierpinski carpet, Koch snowflake}\label{sec:nvo}

\begin{table}[t!]
	\centering
\resizebox{\textwidth}{!}{
\begin{tabular}{|c|c||c|c|c|c|c|c|c|c||c|c|c|}
\hline 
& & $p_1$ & $p_2$ & $p_3$ & $p_4$ & $p_5$ & $p_6$ & $p_7$ & $p_8$ & $t_*$ & $n_s$ & $n_r$
\\
\hline
Sierpinski & $\mu$ & 0.3631 & 0.4921 & 0.1448 & - & - & - & - & - & \multirow{2}{*}{1.3303} & \multirow{2}{*}{7} & \multirow{2}{*}{30}\\
triangle & $\mu'$ & 0.6520 & 0.3183 & 0.0297 & - & - & - & - & - & & & \\
\hline
Vicsek & $\mu$ & 0.0721 & 0.2664 & 0.3158 & 0.1990 & 0.1467 & - & - & - & \multirow{2}{*}{1.4559} & \multirow{2}{*}{5} & \multirow{2}{*}{52}\\
fractal & $\mu'$ & 0.0942 & 0.1064 & 0.1655 & 0.4130 & 0.2209 & - & - & - & & & \\
\hline
Sierpinski & $\mu$ & 0.2041 & 0.1256 & 0.1605 & 0.0908 & 0.2835 & 0.0083 & 0.0032 & 0.1240 & \multirow{2}{*}{1.6670}& \multirow{2}{*}{9} & \multirow{2}{*}{112}\\
carpet & $\mu'$ & 0.0522 & 0.1507 & 0.2695 & 0.2408 & 0.1951 & 0.0054 & 0.0047 & 0.0815 & & &  \\
\hline
Koch & $\mu$ & 0.0591 & 0.0852 & 0.0621 & 0.2714 & 0.0436 & 0.1867 & 0.2918 &- & \multirow{2}{*}{3.0359} & \multirow{2}{*}{43} & \multirow{2}{*}{468}\\
snowflake & $\mu'$ & 0.1575 & 0.1594 & 0.1182 & 0.1728 & 0.1101 & 0.1482 & 0.1338 &- & & & \\
\hline
\end{tabular}}
\caption{Random weights used for the self-similar measures considered in \S\ref{sec:nvo}, along with the resulting value of $t_*$ solving \eqref{eqn:tstar}, and the numbers $n_s$ and $n_r$ of fundamental singular and regular integrals encountered in Algorithm \ref{alg:Algorithm}.}\label{tab:weights}
\end{table}

\begin{figure}[t!]
	\centering
	\includegraphics[width=0.49\linewidth]{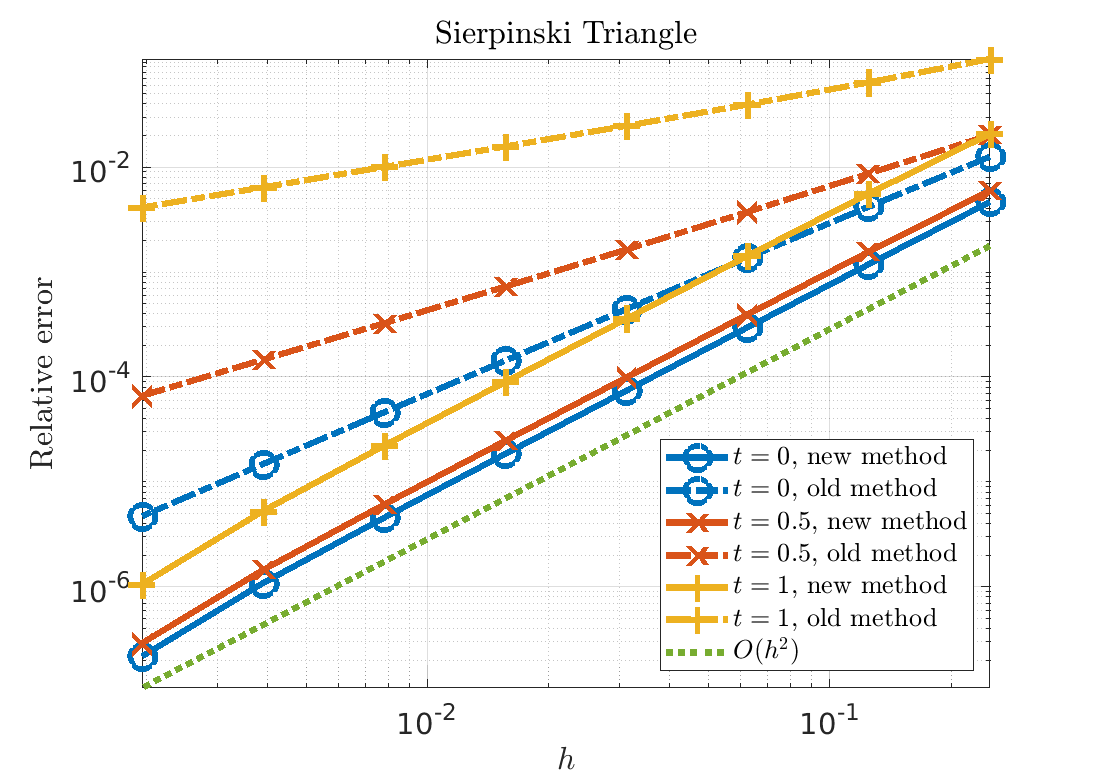}
	\includegraphics[width=0.49\linewidth]{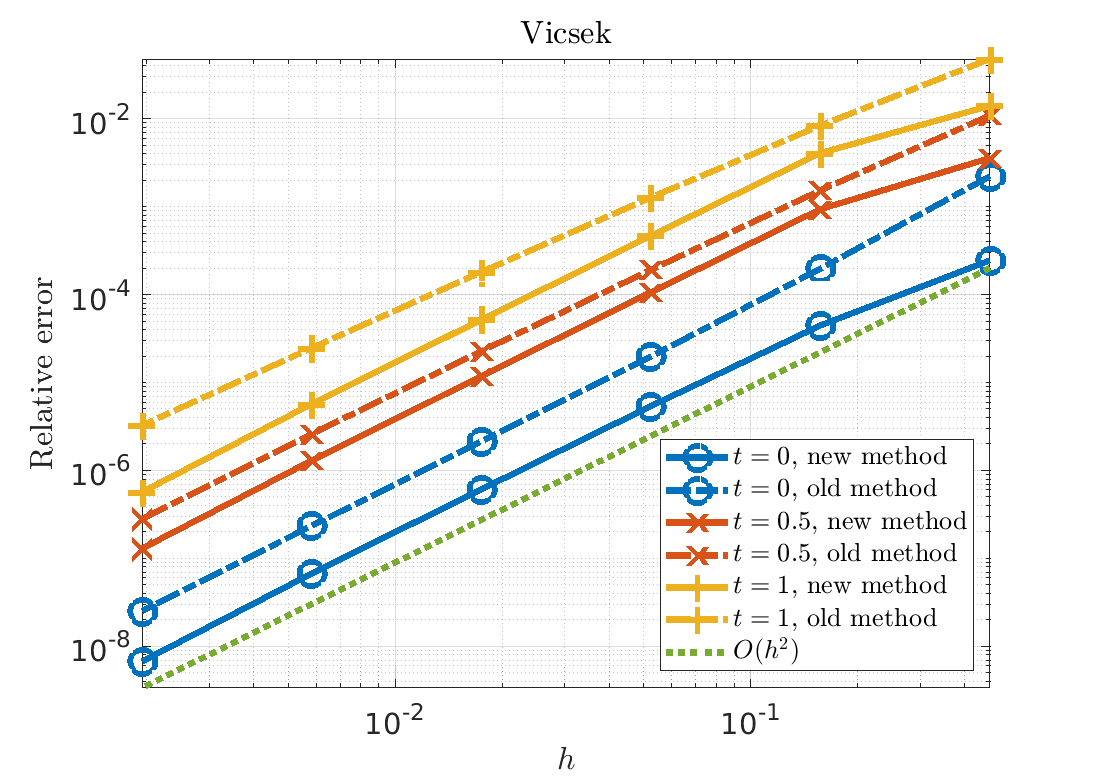}
	\includegraphics[width=0.49\linewidth]{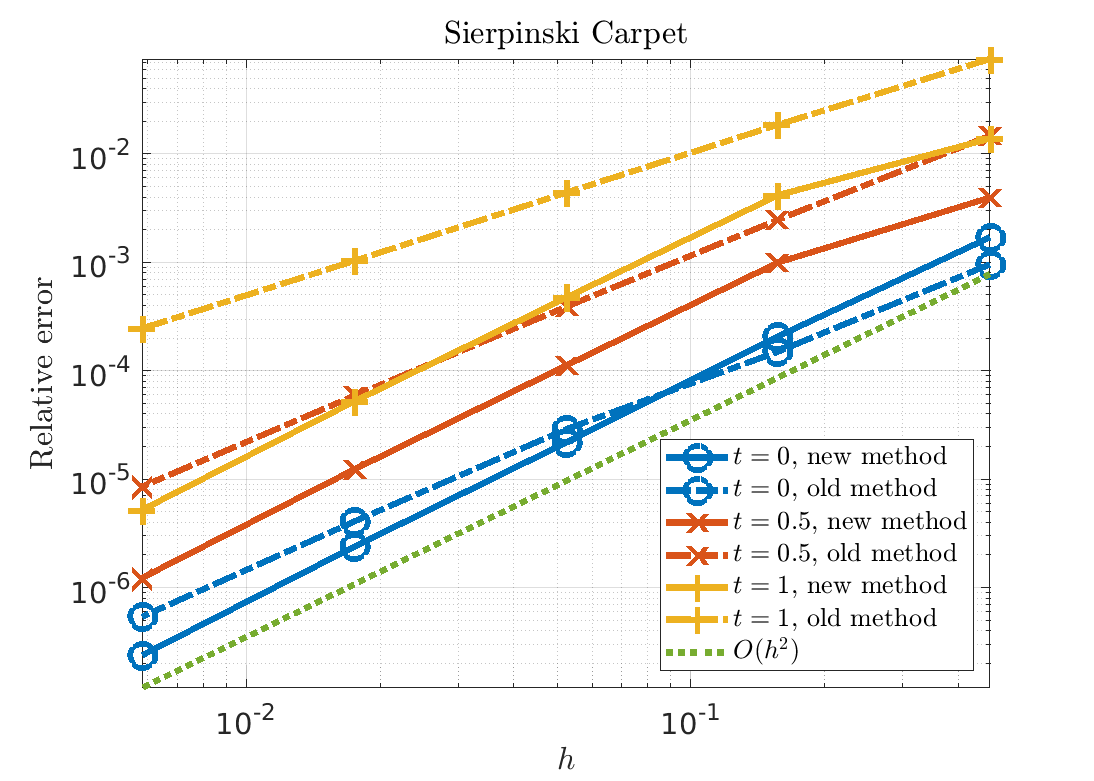}
	\includegraphics[width=0.49\linewidth]{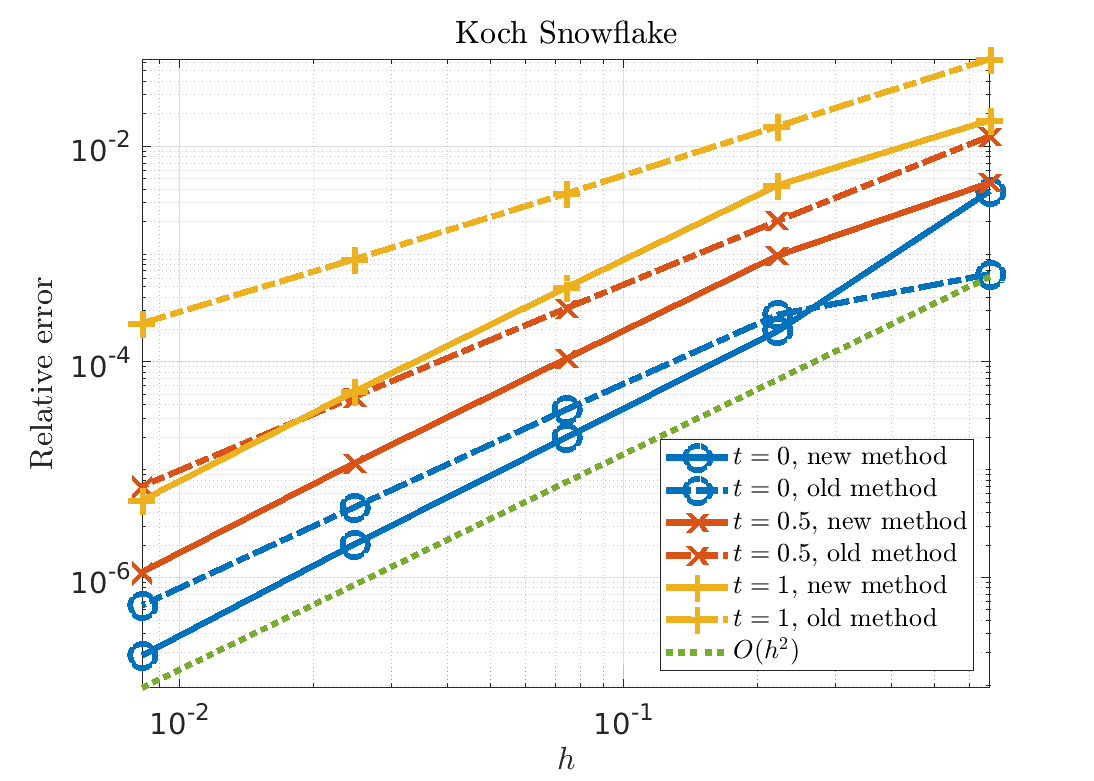}
	\caption{Convergence of composite barycentre rule quadrature approximations for the four examples of \S\ref{sec:nvo}, using the full linear system \eqref{eq:LinearSystem} (labelled ``new method'') and using only the first row of \eqref{eq:LinearSystem} (labelled ``old method'').}
	\label{fig:new-vs-old}
\end{figure}

\begin{table}[t!]
	\centering
\begin{tabular}{|c|c|c|c|c|c|c|c|c|}
\hline
&$t=0$&$t=1/2$&$t=1$\\\hline
Sierpinski triangle& 1.6513 & 1.1475 & 0.6413 \\
\hline
Vicsek fractal& 2.0282 & 2.0232 & 1.8435 \\
\hline
Sierpinski carpet& 1.8384 & 1.7530 & 1.3067 \\
\hline
Koch snowflake& 1.9033 & 1.7340 & 1.2428 \\
\hline
\end{tabular}
\caption{Empirical convergence rates for the old method.}\label{tab:rates}
\end{table}

We first consider the application of our approach to the attractors considered in \S\ref{sec:Sierpinski}-\S\ref{sec:Snowflake}, namely the Sierpinski triangle, Vicsek fractal, Sierpinski carpet and Koch snowflake. However, in contrast to \S\ref{sec:Sierpinski}-\S\ref{sec:Snowflake}, where representation formulas were presented for the standard case where $\mu=\mu'=\cH^d|_\Gamma$ (Lebesgue measure in the case of the Koch snowflake), to demonstrate the generality of our approach we present numerical results for completely generic self-similar measures $\mu\neq\mu'$, with randomly chosen probability weights $p_m$ and $p_m'$, as detailed in Table \ref{tab:weights}. Table \ref{tab:weights} also documents the resulting values of $t_*$, as computed by solving \eqref{eqn:tstar}, as well as the numbers $n_s$, $n_r$, of fundamental singular and regular sub-integrals discovered by our algorithm. 
{In all cases our algorithm terminated, using the same subdivision strategies as in \S\ref{sec:Sierpinski}-\S\ref{sec:Snowflake}, producing an invertible matrix $A$. However, for these non-standard examples there are no nontrivial isometries under which the measures are invariant, so our algorithm took $T=T'$ to be the identity throughout. As a result (cf.\ the related discussion in Remark \ref{rem:Similarities}),  
the linear systems \eqref{eq:LinearSystem} are larger than those obtained in the standard case documented in \S\ref{sec:Sierpinski}-\S\ref{sec:Snowflake}, where additional symmetries of the measures could be exploited.}

In Figure \ref{fig:new-vs-old} (solid curves) we plot the relative error in our quadrature approximation for $I_{\Gamma,\Gamma}$, for three values of $t=0,0.5,1$, obtained by solving the linear system \eqref{eq:LinearSystem} obtained by Algorithm \ref{alg:Algorithm} (using subdivision strategy 2 for the Koch snowflake), combined with composite barycentre rule quadrature for the evaluation of the regular sub-integrals, for different values of the maximum mesh width $h$. In more detail, we plot errors for $h=\diam(\Gamma)\rho^{\ell}$, for $\ell=0,\ldots,\ell_{\mathrm{ref}}-1$, where $\ell_{\mathrm{ref}}$ is the value of $\ell$ used for the reference solution (which is computed using the same method). For the three homogeneous attractors, we take $\rho=\rho_1=\ldots=\rho_M$ (the common contraction factor), while for the Koch snowflake, we take $\rho=1/\sqrt{3}$ (the largest contraction factor).
For the 
Sierpinski triangle $\ell_{\mathrm{ref}}=10$, for the Vicsek fractal $\ell_{\mathrm{ref}}=7$, and for the Sierpinski Carpet and Koch snowflake $\ell_{\mathrm{ref}}=6$.
According to our theory, we expect our method to give $O(h^2)$ error, by Theorem \ref{thm:Barycentre}(ii) and \eqref{eq:QuadError}, and this is exactly the rate we observe in our numerical results in Figure \ref{fig:new-vs-old}. 

In Figure \ref{fig:new-vs-old} (dashed curves) we also show results obtained using the method of our previous paper \cite[(48)]{HausdorffQUAD}, which we refer to as the ``old method''. This method is accurate for disjoint IFS attractors, but is expected to perform less well for non-disjoint attractors, because it only applies self-similarity to deal with the self-interaction integrals, and treats all other sub-integrals as being regular. 
Precisely, the old method corresponds to taking the equation corresponding to the first row in the linear system \eqref{eq:LinearSystem} obtained by Algorithm \ref{alg:Algorithm}, solving this equation for $I_{\Gamma,\Gamma}$, then applying the composite barycentre rule not just to the regular sub-integrals coming from the right-hand side of \eqref{eq:LinearSystem}, but also to the fundamental singular sub-integrals $I_{\bm^{}_{s,i},\bm'_{s,i}}$, $i=2,\ldots,n_s$.
We expect that the resulting quadrature approximation should converge to $I_{\Gamma,\Gamma}$ as $h\to 0$, but at a slower rate than our new method, because of the inaccurate treatment of the singular sub-integrals. This is borne out in our numerical results in Figure \ref{fig:new-vs-old}, with the errors for the old method being significantly larger than those for the new method. To quantify these observations, we present in Table \ref{tab:rates} the empirical convergence rates (computed from the errors for the two smallest $h$ values) observed for the old method for each of the three $t$ values considered. The deviation from $O(h^2)$ convergence is different for each example, but clearly increases as $t$, the strength of the singularity, increases, as one would expect.

For all the experiments in Figure \ref{fig:new-vs-old}, the total number of quadrature points $N_{\rm tot}$ grows like $N_{\rm tot}\approx Ch^{-2d}$ as $h\to 0$, with the value of $C$ depending on the number of fundamental regular sub-integrals that need to be evaluated. (Recall from \S\ref{sec:bary} that for each regular sub-integral we use a tensor product rule with $N^2$ points, where $N\approx C'h^{-d}$ for some $C'$.) 
For each choice of attractor, the value of $C$ for the new method is slightly smaller than that for the old method, because the new method takes greater advantage of similarities between regular sub-integrals. 
The value of $N_{\rm tot}$ used for the reference solutions 
is 
1,291,401,630 for the Sierpinski triangle, 
2,382,812,500 for the Vicsek fractal, 
18,790,481,920 for the Sierpinski carpet, 
and 
379,046,894,100 for the Koch snowflake.

\subsection{Unit interval experiments}\label{sec:intervalexps}

We now consider an attractor $\Gamma\subset\R$, so that we can investigate the performance of the Gauss quadrature discussed in \S\ref{sec:gauss}. The classic example of an IFS attractor $\Gamma\subset\R$ is the Cantor set, but since this is disjoint (in the sense of \eqref{eq:Rdef}), it can already be treated by our old method (of \cite{HausdorffQUAD}). To demonstrate the efficacy of our new method for dealing with non-disjoint attractors we consider the case where $\Gamma=[0,1]\subset\R$, which, as discussed in \S\ref{sec:Interval} (taking $\rho=1/2$), is the attractor of the homogeneous IFS with $M=2$,  $s_1(x)=x/2$ and $s_2(x)=x/2+1/2$. 
We consider the case where $\mu=\mu'$, with $p_1=p_1'=1/3$ and $p_2=p_2'=2/3$, so that, by \eqref{eqn:tstar1}, $t_* = \log(9/5)/\log2\approx 0.848$. As discussed in \S\ref{sec:Interval}, for this problem Algorithm \ref{alg:Algorithm} finds just two fundamental singular sub-integrals, $I_{\Gamma,\Gamma}$ and $I_{1,2}$, and two fundamental regular sub-integrals, $I_{11,21}$ and $I_{11,22}$. 

In Figure \ref{fig:cantor-exps} we report relative errors for the computation of $I_{\Gamma,\Gamma}$ with $t=0$ and $t=1/2$, using Algorithm \ref{alg:Algorithm} combined with Gauss, composite barycentre, and chaos game quadrature for the evaluation of the regular sub-integrals. For each method the total number of quadrature points satisfies $N_{\rm tot}=2N^2$, where $N$ is the number of points used for each of the two iterated integrals in each of the two fundamental regular sub-integrals (recall that we are using tensor product rules). For the composite barycentre rule we have $N\approx \frac{1}{4h}$ in this case.  
As the reference solution we use the result obtained using the Gauss rule with $N=100$, which corresponds to $N_{\rm tot}=20,000$. For the Gauss rule we see the expected root-exponential $O(\re^{-c\sqrt{N_{\rm tot}}})$ convergence predicted by Theorem \ref{thm:Gauss} and \eqref{eq:QuadError}, with $c\approx 1.77$ and $c\approx 2.31$ (for $t=0$ and $t=1/2$ respectively) in this case. As a result, the singular double integral $I_{\Gamma,\Gamma}$ can be computed to machine precision using $N\approx 10$, which corresponds to $N_{\rm tot}\approx 200$ quadrature points.  
The barycentre rule is significantly less accurate, converging like $O(h^2)=O(N^{-2})=O(N_{\rm tot}^{-1})$, as predicted by Theorem \ref{thm:Barycentre} and \eqref{eq:QuadError}. For the chaos game quadrature we computed 1000 realisations, plotting both the errors for each realisation and the average error over all the realisations, which is a proxy for the expected error. The latter is observed to converge like $O(N^{-1/2}) = O(N_{\rm tot}^{-1/4})$, in accordance with the remarks at the end of \S\ref{sec:chaos}.

\begin{figure}[t!]
	\centering
	\includegraphics[width=0.49\linewidth]{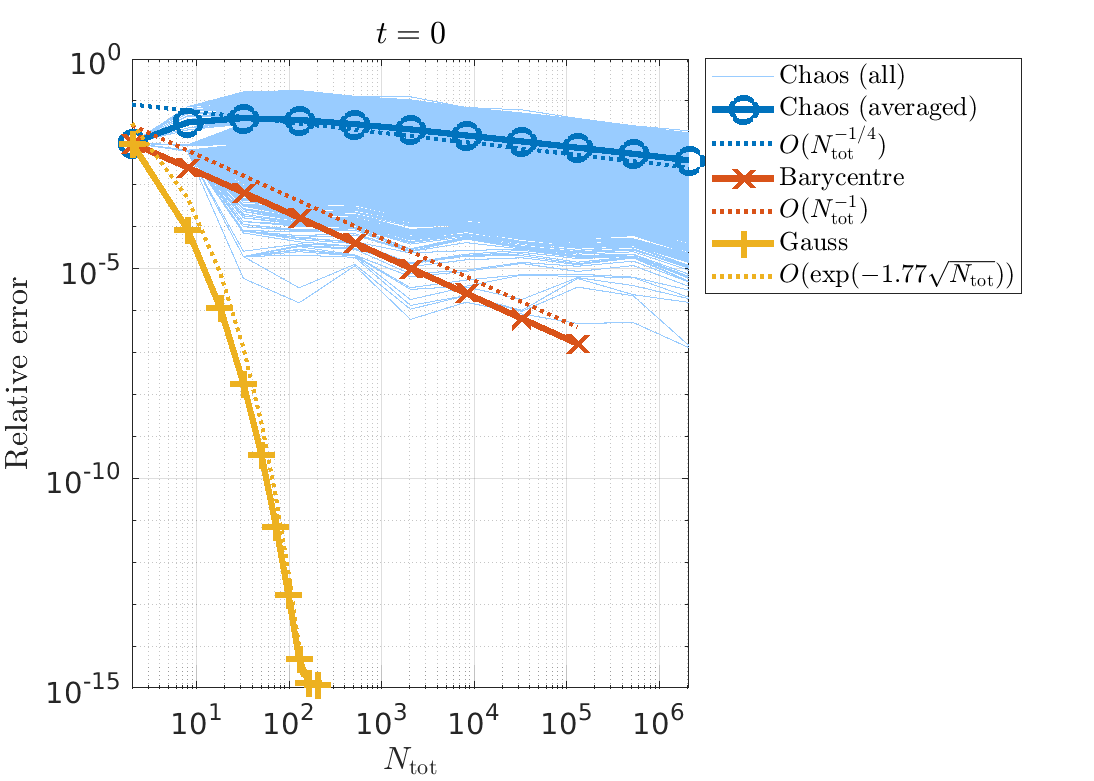}
	\includegraphics[width=0.49\linewidth]{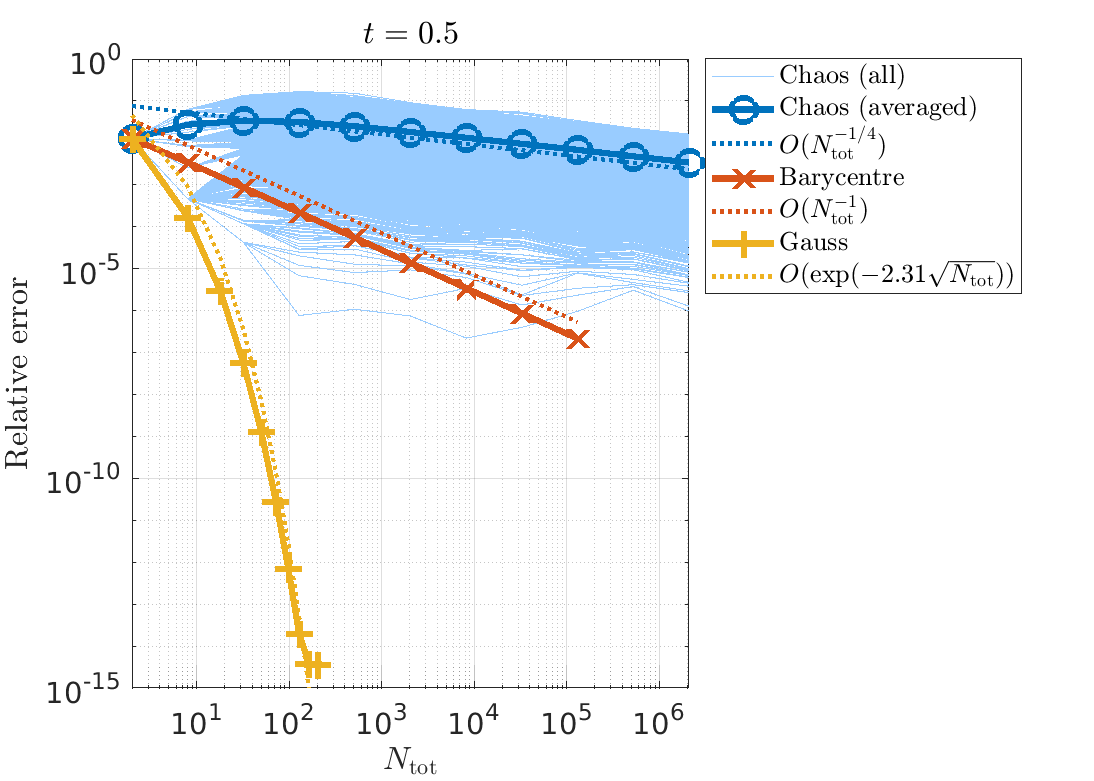}
	\caption{Comparison of the performance of the three quadrature rules presented in \S\ref{sec:gauss}-\S\ref{sec:chaos}, applied in the context of Algorithm \ref{alg:Algorithm}, for the example considered in \S\ref{sec:intervalexps}, which concerns the evaluation of singular double integrals on the unit interval $\Gamma=[0,1]$ with respect to a self-similar measure. 
}
\label{fig:cantor-exps}
\end{figure}

\subsection{Application to Hausdorff BEM for acoustic scattering by fractal screens}
\label{sec:HausdorffBEM}

We conclude by demonstrating how our new representation formulas and resulting quadrature rules can be used to compute the scattering of acoustic waves by fractal screens using the ``Hausdorff boundary element method'' (BEM) of \cite{HausdorffBEM}. For full details of the scattering problem and the Hausdorff BEM we refer the reader to \cite{HausdorffBEM} and the references therein; here we merely provide a brief overview. 

The underlying scattering problem under consideration is the three-dimensional time-harmonic acoustic scattering of an incident plane wave $\re^{\ri kx\cdot \vartheta}$ (for $x=(x_1,x_2,x_3)^T\in \R^3$, wavenumber $k>0$ and unit direction vector $\vartheta\in\R^3$) by a fractal planar screen $\Gamma\times\{0\}\subset\R^3$, where $\Gamma\subset\R^2$ is the attractor of an IFS satisfying the open set condition. Assuming that the total wave field $u$ (which is a solution of the Helmholtz equation $\Delta u + k^2 u=0$ in $\R^3\setminus (\Gamma\times\{0\})$) satisfies homogeneous Dirichlet (``sound soft'') boundary conditions on the screen, it was shown in \cite{HausdorffBEM} that the scattering problem can be reduced to the solution of the integral equation
\begin{align}
\label{eq:BIE}
S \phi = f.
\end{align}
Here $S$ is the single layer boundary integral operator defined by $S\phi(x) = \int_\Gamma \Phi(x,y)\phi(y) \,\rd y$ (with the integral interpreted in a suitable distributional sense), where $\Phi(x,y):=\re^{\ri k |x-y|}/(4\pi|x-y|)$ is the fundamental solution of the Helmholtz equation in three dimensions, $\phi$ is the unknown jump in the $x_3$-derivative of $u$ across the screen, and $f$ is a known function depending on the incident wave. 

The Hausdorff BEM in \cite{HausdorffBEM} discretises \eqref{eq:BIE} using a Galerkin method with a numerical approximation space of piecewise constant functions multiplied by the Hausdorff measure $\cH^d|_\Gamma$. The mesh used for the piecewise constant functions is of the same form as that used in the composite barycentre rule in \S\ref{sec:bary} - having chosen a maximum BEM mesh width $h_{\rm BEM}$ we partition $\Gamma$ using the index set $L_{h_{\rm BEM}}(\Gamma)$ defined in \eqref{eq:Lh_def}. If we choose the natural basis for the approximation space, then assembling the Galerkin matrix involves the numerical evaluation of the integral
\begin{equation}\label{eq:GalInt}
\int_{\Gamma_\bm}\int_{\Gamma_\bn}\Phi(x,y)~\rd\cH^d(y)\rd\cH^d(x), 
\end{equation}
for all pairs of indices $\bm,\bn\in L_{h_{\rm BEM}}(\Gamma)$.

\begin{figure}[t!]
	\centering
	\includegraphics[width=0.49\linewidth]{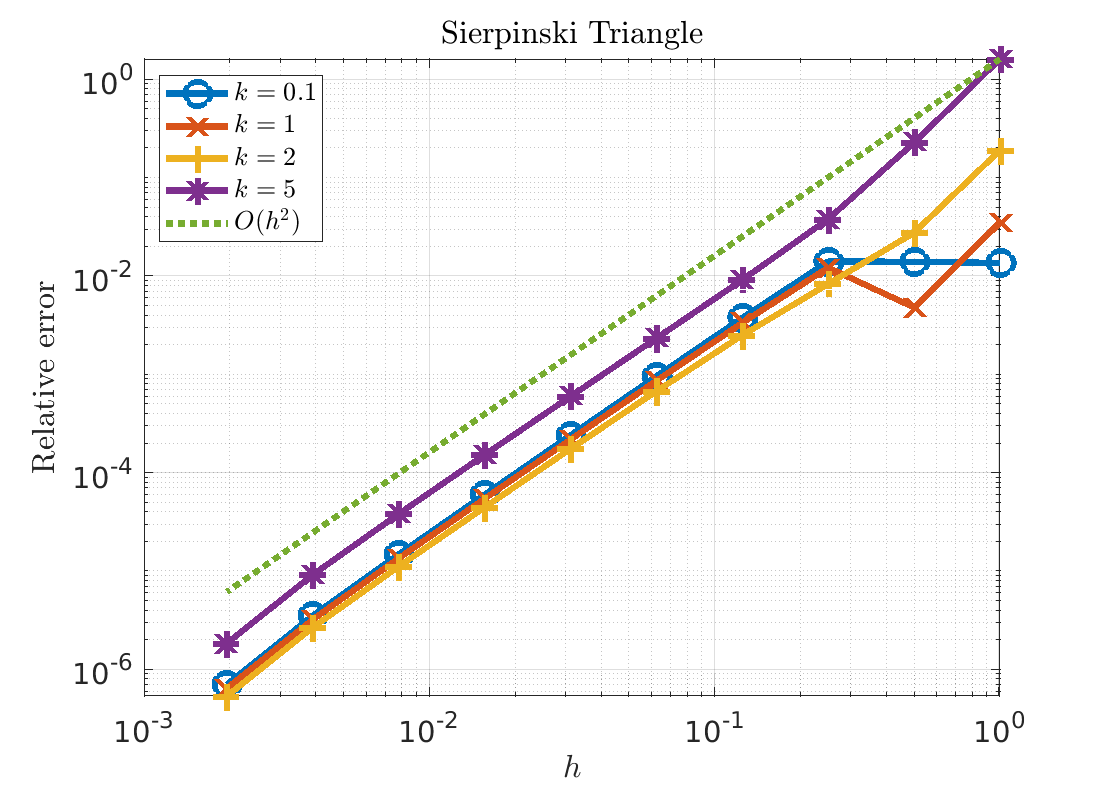}
	\includegraphics[width=0.49\linewidth]{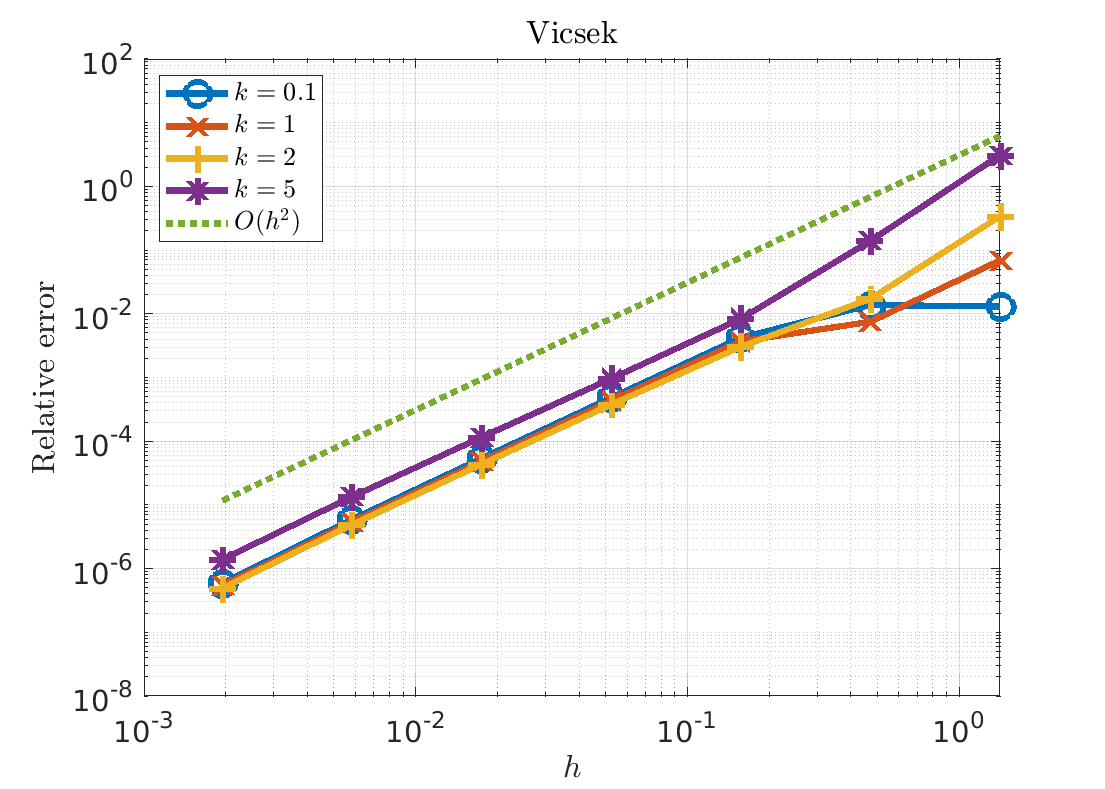}
	\includegraphics[width=0.49\linewidth]{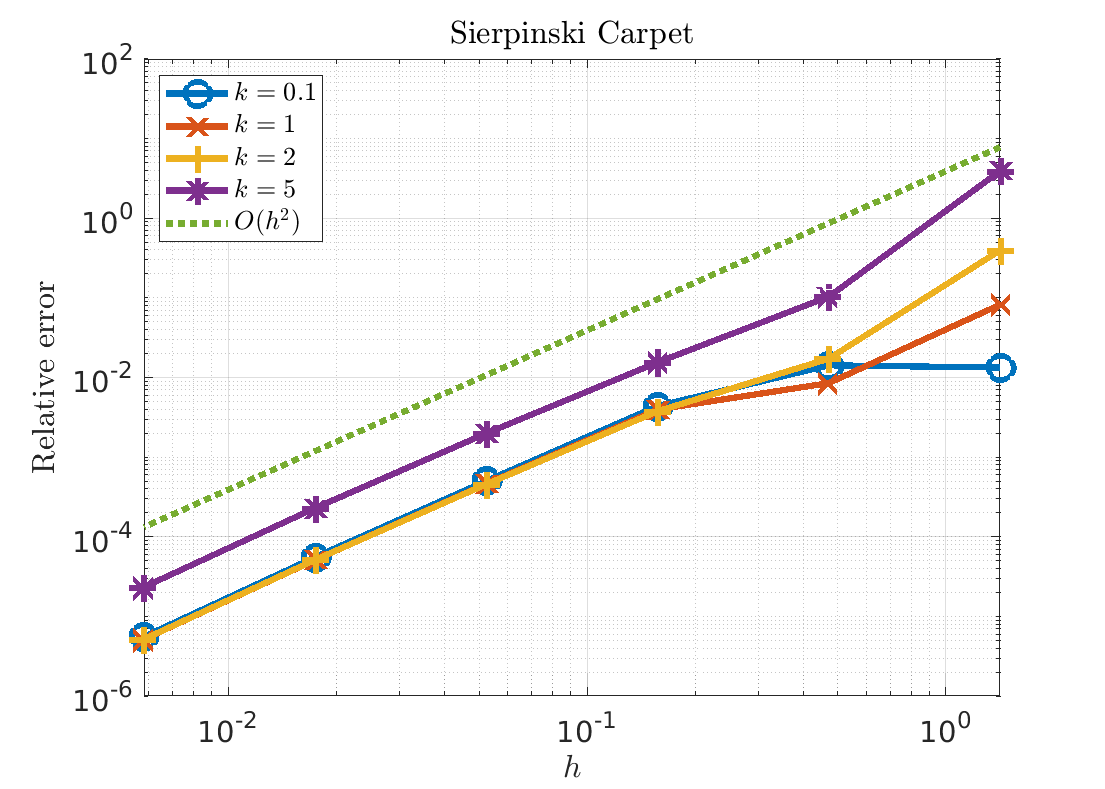}
	\includegraphics[width=0.49\linewidth]{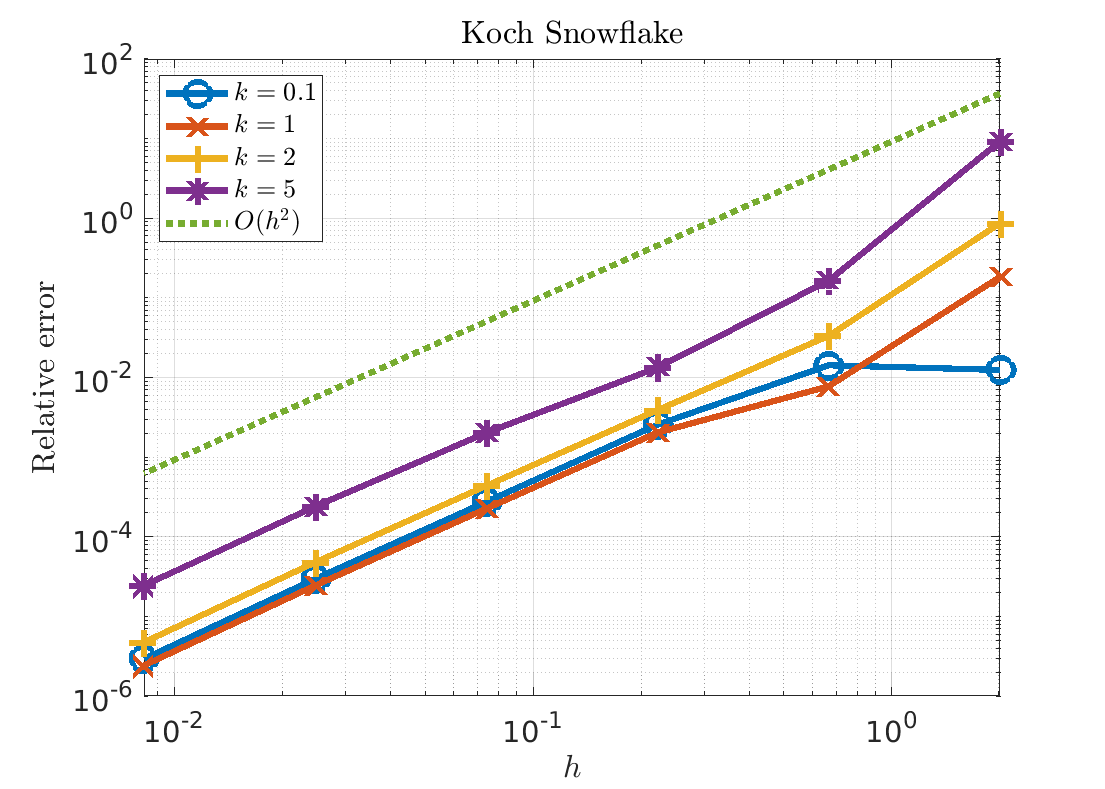}
	\caption{
Convergence of composite barycentre rule approximations to the second integral on the right-hand side of \eqref{eq:singsub} in the case $\Gamma_\bm=\Gamma_\bn=\Gamma$, as considered in \S\ref{sec:HausdorffBEM}.
	}
	\label{fig:singularity-subtraction}
\end{figure}

When $\Gamma_\bm$ and $\Gamma_\bn$ are disjoint, the integral \eqref{eq:GalInt} has a smooth integrand and can be evaluated using the composite barycentre rule with some maximum mesh width $h\leq h_{\rm BEM}$, with error $O(h^2)$ (by Theorem \ref{thm:Barycentre}(ii)). 
When $\Gamma_\bm\cap \Gamma_\bn$ is non-empty the integral \eqref{eq:GalInt} is singular, and to evaluate it we adopt a singularity subtraction approach, writing 
\begin{equation}\label{eq:singsub}
\int_{\Gamma_\bm}\int_{\Gamma_\bn}\Phi(x,y)~\rd\cH^d(y)\rd\cH^d(x) = \frac{1}{4\pi}\int_{\Gamma_\bm}\int_{\Gamma_\bn} \frac{1}{|x-y|}~\rd\cH^d(y)\rd\cH^d(x)+\int_{\Gamma_\bm}\int_{\Gamma_\bn}\Phi_*(x,y)~\rd\cH^d(y)\rd\cH^d(x),
\end{equation}
where $\Phi_*(x,y):=\Phi(x,y)-(4\pi|x-y|)^{-1} = (\re^{\ri k |x-y|}-1)/(4\pi|x-y|)$.
The first integral on the right-hand side of \eqref{eq:singsub} can be evaluated using the methods of this paper with $t=1$. In more detail, if $\Gamma_\bm = \Gamma_\bn$ this first integral will be similar to $I_{\Gamma,\Gamma}$, and if $\Gamma_\bm \neq \Gamma_\bn$ it will be similar to one of the other fundamental singular sub-integrals encountered in Algorithm \ref{alg:Algorithm}. In both cases it can be evaluated by combining Algorithm \ref{alg:Algorithm} with the composite barycentre rule, again with mesh width $h\leq h_{\rm BEM}$ and error $O(h^2)$.  
The second integral on the right-hand side of \eqref{eq:singsub} has a Lipschitz continuous integrand, 
and hence can be evaluated using the composite barycentre rule directly. According to  
Theorem \ref{thm:Barycentre}(i), the error in this approximation is guaranteed to be $O(h)$. 
In fact, for disjoint homogeneous attractors the error in evaluating this second term was proved in \cite[Proposition~5.5]{HausdorffQUAD} to be $O(h^2)${, and} experiments in \cite[Figure~8(a)]{HausdorffQUAD} suggest that the same may be true for certain non-homogeneous disjoint attractors. 
In Figure \ref{fig:singularity-subtraction} we present numerical results suggesting, furthermore, that the same may also be true for certain non-disjoint attractors. %
The plots in Figure \ref{fig:singularity-subtraction} show the relative error (against a high order reference solution) in computing the second term in \eqref{eq:singsub} using {the composite barycentre rule}, for the four attractors from \S\ref{sec:Sierpinski}-\S\ref{sec:Snowflake} and a range of wavenumbers, in the case where $\Gamma_\bm=\Gamma_\bn=\Gamma$. This case is chosen since it represents the most difficult case, in which $\Gamma_\bm$ and $\Gamma_\bn$ have full overlap. For all four examples we clearly observe $O(h^2)$ error in the numerical results. However, we leave theoretical justification of this observation to future work.

\begin{figure}[t!]
	\centering
	\includegraphics[width=0.48\linewidth]{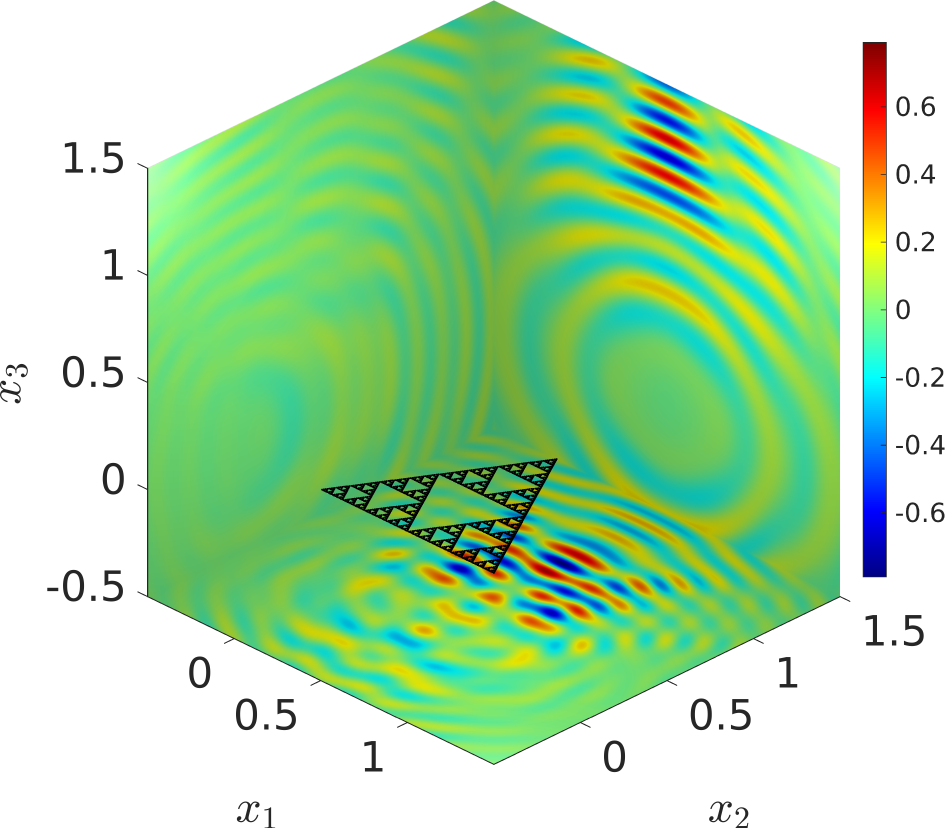}
	\hspace{2mm}
	\includegraphics[width=0.48\linewidth]{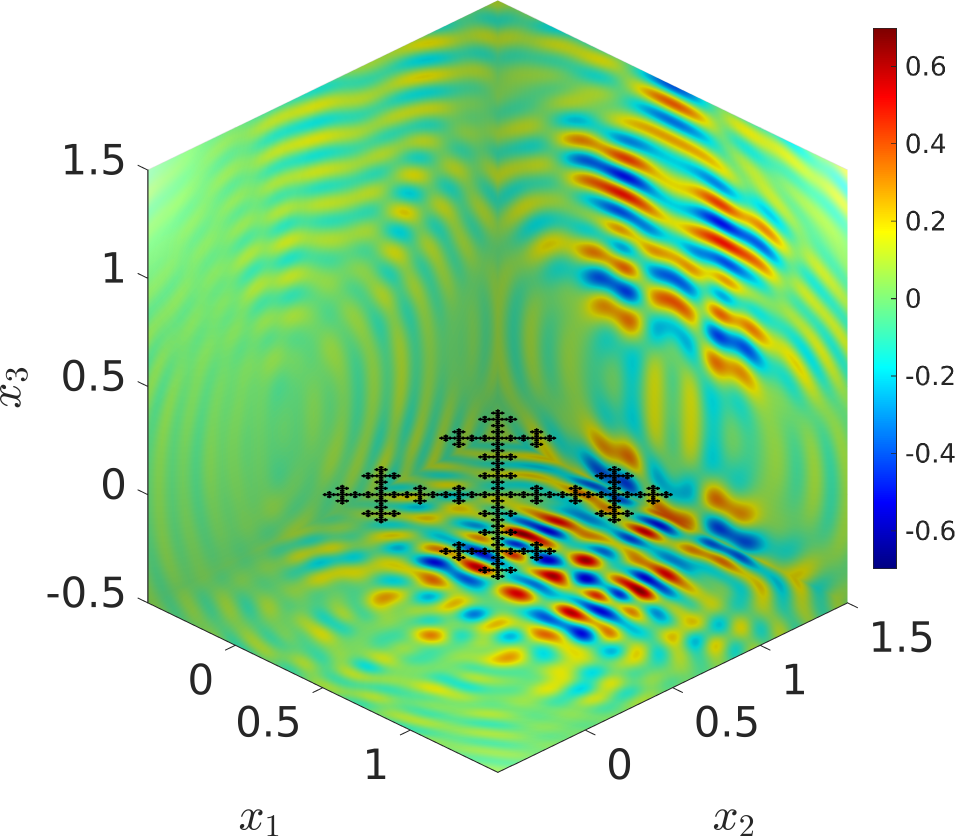}

\includegraphics[width=0.49\linewidth]{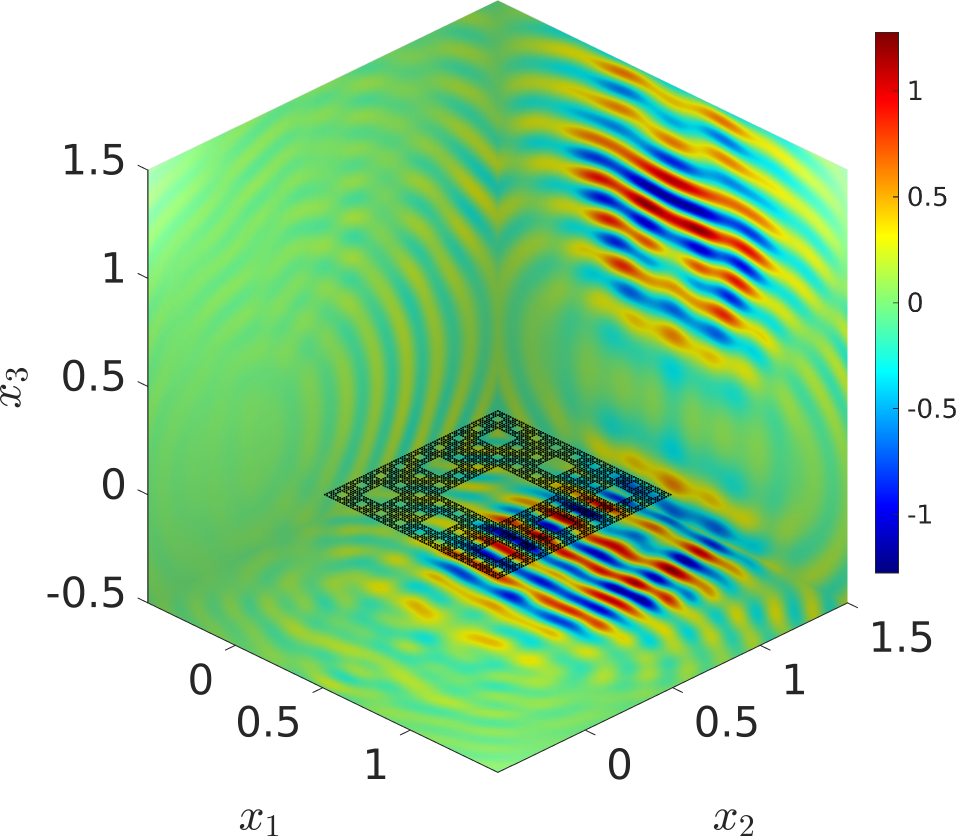}
	\hspace{2mm}
	\includegraphics[width=0.46\linewidth]{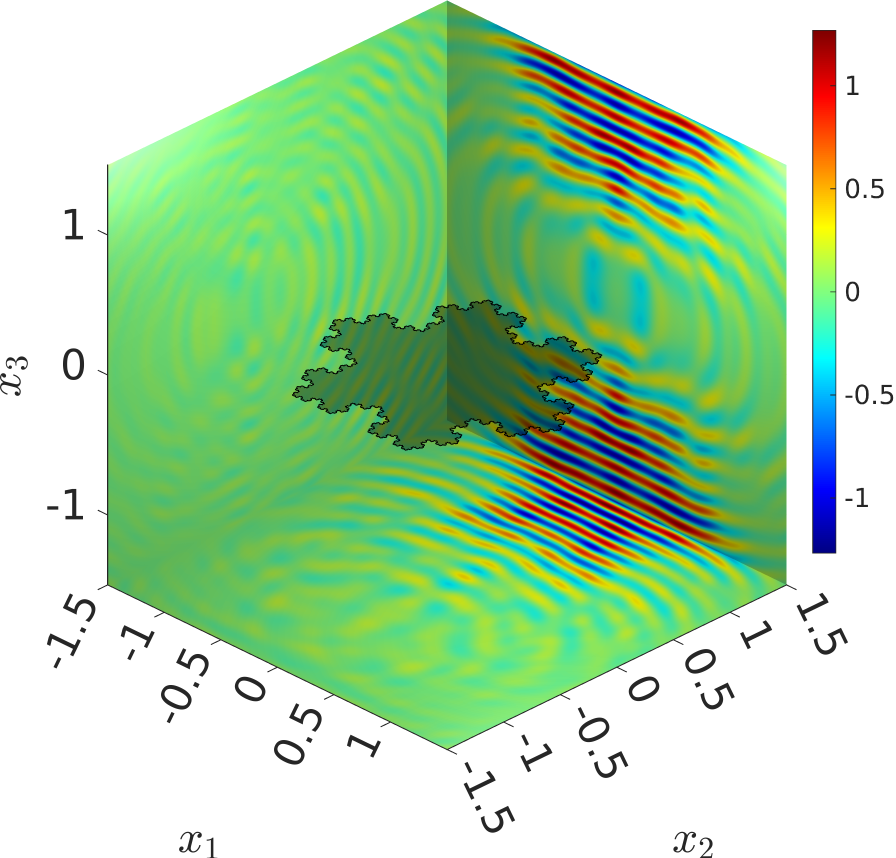}
	\caption{Field scattered by Sierpinski triangle (top left), Vicsek fractal (top right), Sierpinski carpet (bottom left) and Koch snowflake (bottom right), for the Dirichlet screen scattering problem described in \S\ref{sec:HausdorffBEM} with wavenumber $k=50$ and incident angle $\vartheta=(0,1,-1)/\sqrt{2}$. In each case the screen (sketched in black) lies in the plane $x_3=0$. 
}
	\label{fig:ScattFields}
\end{figure}

We end the paper by presenting in Figure \ref{fig:ScattFields} plots of the scattered field computed by our Hausdorff BEM solver (available at \url{www.github.com/AndrewGibbs/IFSintegrals}) for scattering by the four attractors from \S\ref{sec:Sierpinski}-\S\ref{sec:Snowflake}. In each case the wavenumber $k=50$ and incident angle $\vartheta=(0,1,-1)/\sqrt{2}$.
Here $h_{\mathrm{BEM}}=\diam(\Gamma)\rho^{\ell_{\mathrm{BEM}}}$, where $\rho$ is as defined in \S\ref{sec:nvo} and 
$\ell_{\mathrm{BEM}}=5$ for the Sierpinski triangle, $\ell_{\mathrm{BEM}}=4$ for the Vicsek fractal, 
$\ell_{\mathrm{BEM}}=4$ for the Sierpinski Carpet, 
and
$\ell_{\mathrm{BEM}}=8$ for the Koch snowflake, 
so that in each case we are discretising with at least 5 elements per wavelength. 
The Galerkin BEM matrix is constructed as described above, with $h=h_{\mathrm{BEM}}\rho^{4}$ in each case, taking advantage also of the \emph{reduced quadrature} approach described in \cite[Remark~5.19]{HausdorffBEM} (which exploits the far-field decay in $\Phi(x,y)$ to reduce the number of quadrature points for pairs of elements $\Gamma_\bm$ and $\Gamma_\bn$ that are well-separated).

We note that for disjoint attractors the Hausdorff BEM is supported by a fully discrete convergence analysis (presented in \cite{HausdorffBEM}). A similar analysis for the case $d=2$ (applying for instance to the Koch snowflake) will be presented in a forthcoming article \cite{CaChGiHe23}.  

\subsection*{Acknowledgements}
AG and DH acknowledge support from the EPSRC grant EP/V053868/1, and thank the Isaac Newton Institute for Mathematical Sciences, Cambridge, for support and hospitality during the programme \textit{Mathematical theory and applications of multiple wave scattering}, where work on this paper was undertaken. This work was supported by EPSRC grant no EP/R014604/1. 
BM and DH gratefully acknowledge support from the LMS Undergraduate Research Bursary scheme, which funded BM on a summer research internship at UCL, during which this work was initiated.

\bibliography{fractalQuad}
\bibliographystyle{siam}
\end{document}